\newcommand{\Pp}{\mathbb{P}}
\newcommand{\Rr}{\mathbb{R}}
\newcommand{\Nn}{\mathbb{N}}
\newcommand{\Zz}{\mathbb{Z}}
\newcommand{\Qq}{\mathbb{Q}} 
\newcommand{\Ff}{\mathbb{F}}
\def\Ddots{\mathinner{\mkern1mu\raise\p@
\vbox{\kern7\p@\hbox{.}}\mkern2mu
\raise4\p@\hbox{.}\mkern2mu\raise7\p@\hbox{.}\mkern1mu}}
\theoremstyle{plain}
\newtheorem{theorem}{Theorem}[section]    
\newtheorem{addenda}[theorem]{Addendum}
\newtheorem{twisting lemma}[theorem]{Twisting lemma}
\newtheorem{proposition}[theorem]{Proposition}  
\newtheorem{corollary}[theorem]{Corollary}   
\theoremstyle{remark}
\newtheorem{definition}[theorem]{Definition}      
\newtheorem{remark}[theorem]{Remark}   
\newtheorem{example}[theorem]{Example}
\font\tenrm=cmr10
\font\sevenrm=cmr10 scaled 700
\font\sevensl=cmsl10 scaled 700
\def\sep{{\scriptsize\hbox{\rm sep}}}
\def\spe{{0}}
\def\ur{{\scriptsize\hbox{\rm ur}}}
\def\tr{{\hbox{\sevenrm tr}}}
\def\tp{{\hbox{\sevenrm t\sevensl p}}}
\def\Gabs{\hbox{\rm G}}
\def\Gal{\hbox{\rm Gal}}
\def\ker{\hbox{\rm ker}}
\def\cm{\hbox{\hbox{\rm C}\kern-5pt{\raise 1pt\hbox{$|$}}}}
\def\lhfl#1#2{\smash{\mathop{\hbox to 12mm{\leftarrowfill}}
\limits^{#1}_{#2}}}
\def\rhfl#1#2{\smash{\mathop{\hbox to 12mm{\rightarrowfill}}
\limits^{#1}_{#2}}}
\def\build#1_#2^#3{\mathrel{
\mathop{\kern 0pt#1}\limits_{#2}^{#3}}}
\def\htrait#1#2{\smash{\mathop{\hbox to 12mm{\hrulefill}}
\limits^{#1}_{#2}}}
\def\limind{\mathop{\oalign{lim\cr
\hidewidth$\longrightarrow$\hidewidth\cr}}}
\def\limproj{\mathop{\oalign{lim\cr
\hidewidth$\longleftarrow$\hidewidth\cr}}}
\begin{document}

\title[Tchebotarev theorems for function fields]{Tchebotarev theorems for function fields}

\author{Sara Checcoli}

\author{Pierre D\`ebes}

\email{sara.checcoli@gmail.com}

\email{Pierre.Debes@univ-lille1.fr}

\address{Mathematisches Institut Universit\"at Basel, Rheinsprung 21,
CH-4051 Basel, Switzerland}


\address{Laboratoire Paul Painlev\'e, Math\'ematiques, Universit\'e 
Lille 1, 59655 Villeneuve d'Ascq Cedex, France}

\subjclass[2010]{Primary 11R58, 12F10, 12E25, 12E30; Secondary 14Gxx, 14E20}

\keywords{specialization of Galois extensions, function fields, Tchebotarev property, 
Hilbert's ir\-re\-du\-ci\-bility theorem, local and global fields.}

\date{\today}

\begin{abstract} We prove Tchebotarev type theorems for function field extensions over various base fields: 
number fields, finite fields, $p$-adic fields, PAC fields, etc. The Tchebotarev conclusion -- existence of appropriate cyclic residue extensions -- also compares to the Hilbert specialization property. It is more local but holds in more situations and extends  to infinite extensions. For a function field extension satisfying the Tchebotarev conclusion, the exponent of the Galois group is bounded by the l.c.m. of the local specialization degrees. Further local-global questions arise for which we provide answers, examples and counter-examples.
\end{abstract}


\maketitle



\section{Introduction}

The central theme is the specialization of algebraic function field extensions, 
finite or infinite, and our main results are Tchebotarev type statements, for certain base fields (possibly infinite). 

Fix a field $K$,  a smooth projective and geometrically integral $K$-variety $B$ (typically $B=\Pp^1_K$) and a Galois extension $F/K(B)$ of group $G$. For every overfield $k\supset K$ and each point $t_0\in B(k)$, there is a notion of $k$-specialization of  $F/K(B)$ at $t_0$ (\S \ref{ssec:globaldata}); it is a Galois extension $F_{t_0}$ of $k$ of group contained in $G$, well-defined up to conjugation by elements of $G$. For example, if $B=\Pp^1_K$ and $F/K(B)$ is finite and given by an irreducible polynomial $P(T,Y)\in K(T)[Y]$, then for all but finitely many $t_0\in \Pp^1(k)$, the $k$-specialization $F_{t_0}/k$ is the extension of $k$ associated with the polynomial $P(t_0,Y)\in k[Y]$. 

The leading question is to compare the Galois groups of the specializations with the ``generic'' Galois group $G$. The Hilbert specialization property is that the ``special'' groups equal $G$ for ``many'' specializations over $k=K$. Standard situations with the Hilbert property have the base field $K$ {\it hilbertian} ({\it e.g.} a number field), the base variety $B$  
{\it $K$-rational} ({\it e.g.} $B=\Pp^1$) and concerns {\it finite} extensions $F/K(B)$.

We introduce another specialization property, which we call the {\it Tchebotarev existence property} (definition \ref{def:tchebotarev}). 
For finite extensions, it is a function field analog of the existence part in the Tchebotarev density theorem for number field extensions. Namely
we request that every conjugacy class of cyclic subgroups of $G$ be the {\it Frobenius class} of some {\it suitable} specialization $F_{t_0}/k$ of $F/K(B)$ in some local field over $K$. By ``suitable'' we mean  ``unramified and cyclic'' and ``the Frobenius class of $F_{t_0}/k$ ''  is ``the conjugacy class of $\Gal(F_{t_0}/k$)''.  Our property is weaker than the Hilbert property in the sense that it only preserves 
the ``local'' structures, but it allows more general base fields and base varieties and still 
 encapsulates a good part of the Hilbert property; and it is also defined for infinite extensions.

A main feature of an extension $F/K(B)$ with the Tchebotarev property is that it has suitable specializations with any prescribed cyclic 
subgroup of $G$ as Galois group; see \S \ref{sec:tchebotarev-property} for precise statements. We have this practical consequence (proposition \ref{prop:exposant}): 
\vskip 1,5mm

\noindent
if  \hskip 9mm {\rm (Ub-loc-d)} {\it the local degrees of $F/K(B)$ are uniformly bounded},
\vskip 1mm 

\noindent
where by local degrees we mean the degrees of all the ``suitable local specializations'' of $F/K(B)$,
\vskip 1,5mm

\noindent
then  \hskip 3mm {\rm (exp-f)}\hskip 2,5mm {\it the exponent of $G$ is finite.}
\vskip 2mm

\noindent
Furthermore the converse {\rm (exp-f)} $\Rightarrow$ {\rm (Ub-loc-d)} holds too under some standard assumptions on $K$. Finally if $G$ is abelian, {\rm (Ub-loc-d)} implies not only {\rm (exp-f)} but the following stronger condition: 
\vskip 1,5mm 

\noindent
\hskip 13mm {\it  $F\subset K(B)^{(d)}$ for some integer $d\geq 1$.}
\vskip 2mm

\noindent 
where given an integer $d\geq 1$, $K(B)^{(d)}$ denotes the compositum of all finite extensions of $K(B)$ of degree $\leq d$ (corollary \ref{cor:abelian}).

These results were established by the first author and U.~Zannier in the situation $F/K(B)$ is a number field extension $F/K$ \cite{ChZa}, \cite{Ch}. 
It turns out that the core of their arguments is the Tchebotarev property that we have identified. \S \ref{sec:tchebotarev-property} offers a formal set-up around the property and its consequences which includes both the original number field and the new function field situations.

Then in \S \ref{sec:situations} we prove our ``Tchebotarev theorems for function fields'', which provide concrete situations where the property holds.
Theorem \ref{thm:tchebotarev} and corollary \ref{cor:examples} show that a finite regular extension $F/K(T)$ (here $B=\Pp^1$ for simplicity) always has the Tchebotarev existence property if $K$ is a number field or a finite field or a PAC field\footnote{definition recalled in \S \ref{ssec:more-concrete-ex} (a).} with cyclic extensions of any degree or a rational function field $\kappa(x)$ with $\kappa$ a finite field of prime-to-$|G|$ order, etc. With some extra good reduction condition on $F/K(B)$, the property is also shown to hold if $K$ is a $p$-adic field or a formal 
Laurent series field with coefficients in a finite field, etc. To our knowledge only the finite field case was covered in the literature.

In \S \ref{ssec:tchebotarev-hilbert}, we compare our Tchebotarev property with the classical Hilbert specialization property.
The situation is clear for PAC fields for which both properties correspond to well-identified properties of the absolute Galois group of the base field $K$ (proposition \ref{prop:compare-for-PAC}). The general situation is more complex; some of the PAC conclusions still hold, others do not, and some are unclear (\S \ref{ssec:general-situation}). Still we prove that the Hilbert property is somehow squeezed between two variants of the Tchebotarev property (proposition \ref{prop:strongTch-impliesRGH}).

%

\S 5 is devoted to another natural question about the above conditions {\rm (Ub-loc-d)} and ($F\subset K(B)^{(d)}$ for some $d$), which is whether the former implies the latter in general, {\it i.e.}, without assuming $G$ abelian. The answer is ``No''; counter-examples are given in the context of number field extensions
in the Checcoli-Zannier papers. We construct other counter-examples in the situation where $\dim(B)>0$. 
One of them is re-used in a remark on a geometric analog of the Bogomolov property (definition recalled in \S \ref{sec:bogomolov}).

\vskip 2mm

\noindent
{\bf Acknowledgments.} We are grateful to Lior Bary-Soroker for his help with \S \ref{ssec:tchebotarev-hilbert}. 
Our thanks also go to Moshe Jarden and Lorenzo Ramero for their interest in the paper and several valuable references.

\section{The Tchebotarev existence property} \label{sec:tchebotarev-property}

We define the Tchebotarev existence property for finite and infinite extensions and investigate 
 its implications.

\subsection{Preliminaries}

Given a field $k$, we fix an algebraic closure $\overline k$ and denote the separable closure of $k$ in $\overline k$ by $k^\sep$ and its absolute Galois group by $\Gabs_k$. 

\subsubsection{Local fields}
Given a field $K$, what we call a {\it local field} over $K$ 
is a finite extension $k_v$ of some completion $K_v$ of $K$ for some discrete valuation $v$ on $K$. The field $k_v$ is complete with respect to the unique prolongation of $v$ to $k_v$, which we still denote by $v$.

Fields $K$ will be given with a set ${\mathcal M}$ of finite {\it places} of $K$ ({\it i.e.} of equivalence classes of discrete valuations on $K$) called a {\it localization set} of $K$. A local field $k_v$ over 
$K$ with $v\in {\mathcal M}$ is called a ${\mathcal M}$-local field over $K$.
When the context is clear, we will drop the reference to ${\mathcal M}$.

Here are some typical examples. 

\begin{example}
(a) A {\it complete valued field} $K_v$ for a non trivial discrete valuation $v$ will be implicitly given with the  localization set ${\mathcal M}=\{ v\}$. The ${\mathcal M}$-local fields over $K_v$ are $K_v$ and its finite extensions. 
\vskip 1,5mm

\noindent
(b) A {\it number field} $K$ will be implicitly given with the  localization set ${\mathcal M}$ consisting of all the finite places of $K$. The ${\mathcal M}$-local fields over $K$ are the non-archimedean completions of $K$ and its finite extensions. 
\vskip 1,5mm

\noindent
(c) If $\kappa$ is a field and $x$ an indeterminate, the {\it rational function field} $\kappa (x)$ will be implicitly given with the localization set ${\mathcal M}$ consisting of all the $(x-x_0)$-adic valuations where $x_0$ ranges over $\Pp^1(\kappa)$ (with the usual convention that $x-\infty = 1/x$). The ${\mathcal M}$-local fields over $\kappa(x)$ are the fields $\kappa((x-x_0))$ of formal Laurent series in $x-x_0$ with coefficients in $\kappa$ and their finite extensions ($x_0\in \Pp^1(\kappa)$).
\vskip 1,5mm

\noindent
(d) A {\it field} $K$, without any specification, will be implicitly given with the localization set ${\mathcal M}$ consisting of the sole trivial discrete valuation, denoted $0$.  The ${\mathcal M}$-local fields over $K$ are $K$ and its finite extensions. 
\end{example}

\subsubsection{Local specializations and Frobenius subgroups} \label{ssec:globaldata}
Suppose given a base field $K$, a smooth projective and geometrically integral $K$-variety $B$ 
and a Galois extension $F/K(B)$ 
of  Galois group $G$.

The following notions are classical when the extension $F/K(B)$ is finite and extend naturally to infinite extensions by writing $F/K(B)$ as the union of an increasing sequence of finite Galois extensions.

Given a point 
$t_0\in B(K)$, we denote by $F_{t_0}/K$ the {\it specialization of $F/K(B)$ 
at $t_0$}: if ${\rm Spec}(A) \subset B$ is some affine neighborhood of $t_0$, $A^\prime_F$ the integral closure of $A$ in $F$, then $F_{t_0}/K$ is the residue extension of the integral extension $A^\prime_F/A$ at some prime ideal above the maximal ideal corresponding to $t_0$ in ${\rm Spec}(A)$. It is a normal extension well-defined up to conjugation by elements of $G$.

\begin{definition}
Given an overfield $k$ of $K$ and 
$t_0\in B(k)$, the extension  $(Fk)_{t_0}/k$ is called a \emph{$k$-specialization of $F/K(B)$}. If $k_v$ is a local field over $K$, points $t_0 \in B(k_v)$ are
called {\it local points} of $B$, the associated $k_v$-specializations $(Fk_v)_{t_0}/k_v$ {\it local specializations} and the degrees $[(Fk_v)_{t_0}:k_v]$ {\it local degrees} of $F/K(B)$.
\end{definition}

Local degrees are to be understood as supernatural numbers \cite[\S 22.8]{FrJa} if $F/K(B)$ is infinite. 

Denote the branch locus of $F/K(B)$ by $D$, {\it i.e.}, the formal sum of all hypersurfaces of $B\otimes_K K^\sep$ such that the associated discrete valuations are ramified in the field extension $F K^\sep/K^\sep(B)$.  If the extension $F/K(B)$ is finite, $D$ is an effective divisor; in general $D$ is an inductive limit of effective divisors.

\begin{definition}
Given a local field $k_v$ over $K$ and a local point $t_0\in B(k_v)\setminus D$, the Galois group $\Gal((Fk_v)_{t_0}/k_v)$ is called the {\it Frobenius subgroup} of $F/K(B)$ at $t_0$ over $k_v$. The local point $t_0\in B(k_v)\setminus D$ is said to be {\it $k_v$-unramified} for the extension $F/K(B)$ if the associated $k_v$-specialization $(Fk_v)_{t_0}/k_v$ is unramified\footnote{When $v$ is the trivial valuation, this condition is vacuous as all finite extensions of $k_v$ are unramified.}. 
 \end{definition}

The Frobenius subgroup is a subgroup of $G$ well-defined up to conjugation by elements of $G$. Its order is the local degree $[(Fk_v)_{t_0}:k_v]$. We use the phrase {\it unramified local degree} for this degree when $t_0$ is $k_v$-unramified for $F/K(B)$.

%

\subsection{The Tchebotarev existence property} \label{sec:tchebotarev}

\subsubsection{Finite extensions} \label{ssec:finite-extensions}

\begin{definition} \label{def:tchebotarev}
(a) If $K$ is given with a localization set ${\mathcal M}$, a finite Galois extension $F/K(B)$ of group $G$ is said to have the {\it Tchebotarev existence property with respect to ${\mathcal M}$} if for every element $g\in G$, there exists a ${\mathcal M}$-local field $k_v$ over $K$ and a local point $t_0\in B(k_v)\setminus D$,  $k_v$-unramified  for $F/K(B)$, such that the Frobenius subgroup of $F/K(B)$ at $t_0$ over $k_v$ is cyclic and conjugate to the subgroup $\langle g\rangle\subset G$. 
\vskip 0,5mm

\noindent
(b) We say further that $F/K(B)$ has the {\it strict  Tchebotarev existence property} if in addition to the above, the ${\mathcal M}$-local fields $k_v$ can be taken to be completions $K_v$ of $K$ ({\it i.e.}, no finite extension is necessary). 
\end{definition}

\begin{remark}\label{rem:unr-loc-deg-bounded} If $K$ is a number field or if $K=\kappa(x)$ with $\Gabs_\kappa$ pro-cyclic, the Frobenius subgroups of $F/K(B)$ at local points $t_0\in B(k_v)\setminus D$, $k_v$-unramified for $F/K(B)$, are automatically cyclic as quotients of the pro-cyclic group $\Gal(k_v^{\ur}/k_v)$ (with $k_v^{\ur}$ the unramified closure of $k_v$). 
\end{remark}

Definition \ref{def:tchebotarev} is modelled upon the situation of number field extensions 
$F/K$. It is in fact a generalization: take $B={\rm Spec}(K)$; for every finite place of $K$, there is 
only one point in $B(K_v)={\rm Spec}(K_v)$ and the corresponding local specialization of $F/K$ is the $v$-completion of $F/K$. From the classical Tchebotarev density theorem, Galois extensions of number fields indeed have the strict Tchebotarev existence
property.\footnote{As pointed out by M. Jarden, the weaker density property proved by Frobenius 
({\it e.g.} \cite[p.134]{janusz}), where a cyclic subgroup instead of a specific element of the Galois group
is given is sufficient to prove our property for number field extensions.}
In this paper we will be more interested in function field extensions $F/K(B)$ with $\dim(B)>0$. 
Concrete situations where the {Tchebotarev existence property} is satisfied are given in \S \ref{sec:situations}.

\subsubsection{Infinite extensions} \label{ssec:tchebotarev-infinite} 
Definition \ref{def:tchebotarev} extends to infinite extensions. 

\begin{definition} A Galois extension $F/K(B)$ (possibly infinite) 
is said to have the {\it Tchebotarev existence property} (w.r.t. a localization set ${\mathcal M}$ of $K$) if $F/K(B)$ is the union of an increasing sequence of finite Galois extensions $F_n/K(B)$ 
that all have the 
Tchebotarev existence property (w.r.t. ${\mathcal M}$); and similarly for the {\it strict Tchebotarev existence property}. 
\end{definition}

This definition does not depend on the choice of the increasing sequence $(F_n/K(B))_{n\geq 1}$  such that $\bigcup_{n\geq 1} F_n = F$. This follows from the fact (left as an exercise) that given two finite Galois extensions $E/K(B)$ and $E^\prime/K(B)$ such that $E^\prime\supset E$, if $E^\prime/K(B)$ has the Tchebotarev existence property (strict or not), then so does $E/K(B)$. 

\subsection{A local-global conclusion for infinite extensions} \label{ssec:loc-glob-conclusion}
An immediate consequence of the Tchebotarev existence property is that for a finite Galois extension 
$F/K(B)$ of group $G$,
\vskip 1,5mm

\noindent
{\rm (*)} {\it the orders of elements of $G$ are exactly the unramified ${\mathcal M}$-local degrees of $F/K(B)$ corresponding to cyclic Frobenius subgroups.}
\vskip 1,5mm

\noindent
In particular the exponent of $G$ is the {l.c.m.} of these local degrees.  
Proposition \ref{prop:exposant} below shows that conclusion (*) extends in some form to infinite extensions.
The following definitions will be used.

\begin{definition} \label{def:standard}
A localization set ${\mathcal M}$ of a field $K$ is said to be {\it standard} if the local fields $k_v$ are perfect and the absolute Galois groups $\Gabs_{k_v}$ are of uniformly bounded rank {\rm (}$v\in {\mathcal M}${\rm )}.

\end{definition}
\vskip -1mm

This holds in particular in the following situations: $K$ is a number field, a $p$-adic field, 
a perfect field with absolute Galois group of finite rank ({\it e.g.} a finite field), 
a field $K=\kappa(x)$ or $K=\kappa((x))$ with $\kappa$ of characteristic $0$ and with absolute Galois group $\Gabs_\kappa$ of finite rank, etc.
\vskip 1mm

We also say that a family $(d_v)_{v}$ of positive integers indexed by $v$ 
is {\it uniformly bounded} if there is a constant $\delta$ depending on $F/K(B)$ but not on $v$ such that all integers $d_v$ are $\leq \delta$.


\begin{proposition} \label{prop:exposant} Let 
 $F/K(B)$ be a Galois extension (possibly infinite)
with Galois group $G$ and with the Tchebotarev existence property. Then 
\vskip 2mm

\noindent
if  \hskip 4mm {\rm (Ub-loc-d)} the ${\mathcal M}$-local degrees of $F/K(B)$ are uniformly bounded, 
\vskip 2mm

\noindent
then  \hskip 1mm {\rm (exp-f)}\hskip 2,5mm the exponent of $G$ is finite.
\vskip 2mm

\noindent
Furthermore the converse {\rm (exp-f)} $\Rightarrow$ {\rm (Ub-loc-d)} holds too if the localization set ${\mathcal M}$ is {\it standard}  (independently of the Tchebotarev property).
\end{proposition}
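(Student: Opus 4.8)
The plan is to treat the two implications separately, reducing each to a statement about the finite Galois quotients $G_n = \Gal(F_n/K(B))$, where $F = \bigcup_{n\geq 1} F_n$ presents $F/K(B)$ as an increasing union of finite Galois subextensions witnessing the Tchebotarev property. For the implication {\rm (Ub-loc-d)} $\Rightarrow$ {\rm (exp-f)}, I would first note that under {\rm (Ub-loc-d)} every $\mathcal{M}$-local degree of $F/K(B)$ is a finite integer $\leq \delta$ for some fixed $\delta$. Since $F_n\subseteq F$, the Frobenius subgroup of $F_n/K(B)$ at any local point is the image of the corresponding Frobenius subgroup of $F/K(B)$ under $G\to G_n$, so its order — the local degree of $F_n$ — divides that of $F$ and is hence also $\leq\delta$. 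Applying conclusion {\rm (*)} to each finite extension $F_n/K(B)$, which has the Tchebotarev property, the order of every element of $G_n$ equals one of these unramified local degrees and is therefore $\leq\delta$. Consequently $\exp(G_n)$ divides $N:=\mathrm{lcm}(1,\dots,\delta)$ for every $n$, so $g^N=1$ for all $g\in G_n$ and all $n$, i.e. $g^N=1$ for all $g\in G=\varprojlim_n G_n$. Thus $\exp(G)\leq N$ is finite.

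For the converse {\rm (exp-f)} $\Rightarrow$ {\rm (Ub-loc-d)} under the standardness hypothesis, set $e=\exp(G)$ and let $r$ be a uniform bound on the ranks of the groups $\Gabs_{k_v}$ ($v\in\mathcal{M}$), which exists because $\mathcal{M}$ is standard. Fix any $\mathcal{M}$-local field $k_v$ and any $t_0\in B(k_v)$. Since $k_v$ is perfect (standardness again), the specialization $(Fk_v)_{t_0}/k_v$ is separable, so its Galois group $H:=\Gal((Fk_v)_{t_0}/k_v)$ is a quotient of $\Gabs_{k_v}$ and is therefore topologically generated by at most $r$ elements. On the other hand $H$ is, up to conjugation, a closed subgroup of $G$, whence $\exp(H)$ divides $\exp(G)=e$.

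It remains to bound $|H|$ independently of $v$ and $t_0$. Every finite quotient $\overline H$ of $H$ is generated by at most $r$ elements and has exponent dividing $e$; by Zelmanov's positive solution of the restricted Burnside problem there is a bound $B(r,e)$, depending only on $r$ and $e$, with $|\overline H|\leq B(r,e)$. As all finite quotients of $H$ obey this bound, $H$ is itself finite with $|H|\leq B(r,e)$, so the local degree $[(Fk_v)_{t_0}:k_v]=|H|$ is $\leq B(r,e)$ for every $v$ and $t_0$; this is precisely {\rm (Ub-loc-d)}. The main obstacle — and the reason the standardness hypothesis (bounded rank, hence bounded generation, together with perfectness, hence separability) is essential — is exactly this last step: passing from ``bounded number of generators and bounded exponent'' to ``bounded order'' requires the deep restricted Burnside theorem, without which finitely generated groups of finite exponent need not be finite.
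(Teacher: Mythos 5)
Your proof is correct and takes essentially the same route as the paper's: the forward implication reduces to the finite levels $F_n/K(B)$ via statement (*) together with the fact that local degrees of $F_n$ divide those of $F$, and the converse combines perfectness (so residue extensions are Galois), the uniform rank bound on $\Gabs_{k_v}$, the exponent bound coming from $G$, and Zelmanov's solution of the restricted Burnside problem. The only cosmetic difference is that you bound the full profinite group $\Gal((Fk_v)_{t_0}/k_v)$ through its finite quotients, whereas the paper bounds each finite level $\Gal((F_nk_v)_{t_0}/k_v)$ directly; the ingredients are identical.
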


The special case of proposition \ref{prop:exposant} for which $F/K(B)$ is a Galois extension $F/K$ of number fields was 
proved in \cite{ChZa} and \cite{Ch}.

\begin{proof}[Proof of proposition \ref{prop:exposant}] Write $F/K(B)$ as an increasing union of finite Galois extensions $F_n/K(B)$ ($n\geq 1$). Let $g\in G$. For each $n\geq 1$, let $g_n$ be the projection of $g$ onto $\Gal(F_n/K(B))$.  From statement (*), for each $n\geq 1$, the order of $g_n$ is the unramified local degree $[(F_nk_v)_{t_0}:k_v]$ for some place $v\in {\mathcal M}$ and some point $t_0 \in B(k_v)\setminus D$. In particular this order divides the local degree $[(Fk_v)_{t_0}:k_v]$. This yields the following which compares to (*) above.
\vskip 2mm

\noindent {\rm (**)} {\it the set of orders of elements of $G$ is a subset of the set of all ${\mathcal M}$-local degrees of $F/K(B)$.} 
\vskip 2mm

\noindent
Implication (Ub-loc-d) $\Rightarrow$ (exp-f) is an immediate consequence.

For the converse, we borrow an argument from \cite{Ch}. Let $k_v$ be a ${\mathcal M}$-local field over $K$ and $t_0\in B(k_v)$. Fix $n\geq 1$. Assume $k_v$ is perfect. Then $(F_nk_v)_{t_0}/k_v$ is a finite Galois extension and the local degree $[(F_nk_v)_{t_0}:k_v]$ is the order of the group $\Gal((F_nk_v)_{t_0}/k_v)$. Assume further that there is a constant $N$ depending only of $F/K(B)$ such that $\Gabs_{k_v}$ is of rank $\leq N$. Then the finite group $\Gal((F_nk_v)_{t_0}/k_v)$, a quotient of $\Gabs_{k_v}$, has a generating set with at most $N$ elements. The group $\Gal((F_nk_v)_{t_0}/k_v)$ is also of exponent $\leq \exp(G)$ (as a subgroup of $\Gal(F_n/K(B))$ which itself is a quotient of $G$). If $\exp(G)$ is finite, it follows from the Restricted Burnside's Problem solved by Zelmanov (see {\it e.g.} \cite{vaughan}) that the order of the group $\Gal((F_nk_v)_{t_0}/k_v)$ can be bounded by a constant only depending on $\exp(G)$ and $N$. 
\end{proof}


The strict variant of implication (exp-f) $\Rightarrow$ (Ub-loc-d) for which only the ${\mathcal M}$-local degrees corresponding to completions of $K$ are considered holds too if $F/K(B)$ has the strict Tchebotarev property. The proof above can easily be adjusted.

\subsection{A refined question} \label{ssec:refinedquestion}
A special situation where the exponent of $G=\Gal(F/K(B))$ is finite is when 
\vskip 2mm 

\noindent
\hskip 2mm {\it $F\subset K(B)^{(d)}$ for some integer $d\geq 1$.}
\vskip 2mm

\noindent
(Indeed the Galois group $\Gal(F/K(B))$ is then a quotient of the group
 $\Gal((K(B)^{(d)}/K(B))$,  which is of exponent $\leq d!$).
 \vskip 2mm

The question then arises as to whether {\rm (Ub-loc-d)} 
implies that
$F\subset K(B)^{(d)}$ for some $d$. 
For number fields, counter-examples were given in \cite{ChZa}, \cite{Ch}. Constructing other counter-examples with $\dim(B)>0$ was a motivation for this work.  
\S \ref{sec:counterexamples} is devoted to this subtopic.

\vskip 1mm

However the answer to the question is ``Yes'' if  the group $G$ 
is  abelian. For number field extensions this was first proved in \cite{ChZa}.

\begin{corollary}\label{cor:abelian} Let $F/K(B)$ be a Galois extension with the Tchebotarev existence property. Assume that condition {\rm (Ub-loc-d)} holds and that $G=\Gal(F/K(B)$ is abelian. 
Then $F\subset K(B)^{(d)}$ for some $d$. 
\end{corollary}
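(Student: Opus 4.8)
The plan is to combine Proposition \ref{prop:exposant} with the decomposition of finite abelian groups into cyclic factors, applied one finite layer at a time. First I would invoke Proposition \ref{prop:exposant}: since $F/K(B)$ has the Tchebotarev existence property and {\rm (Ub-loc-d)} holds, the exponent of $G$ is finite; call it $e$. The naive hope would be to deduce that $F/K(B)$ is itself finite, but this is false: an abelian group of finite exponent can be infinite (for instance $(\Zz/2\Zz)^{\Nn}$), so the profinite group $G$ may well be infinite. Consequently one cannot bound $[F:K(B)]$, and the argument must instead bound the degrees of the cyclic pieces out of which $F$ is assembled.

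The key step is therefore a layer-by-layer analysis. Let $L$ be an arbitrary finite subextension of $F/K(B)$. Since $G$ is abelian, the subgroup $\Gal(F/L)$ is normal, so $L/K(B)$ is Galois with group $H=G/\Gal(F/L)$; as $[L:K(B)]<\infty$, $H$ is a \emph{finite} abelian group, and its exponent divides $\exp(G)=e$. By the structure theorem, $H\cong \Zz/d_1\Zz\times\cdots\times\Zz/d_r\Zz$ with each $d_i\mid \exp(H)$, hence $d_i\leq e$. Writing $N_i$ for the product of all factors except the $i$-th, the fixed field $L^{N_i}$ is a cyclic extension of $K(B)$ of degree $d_i\leq e$; since $\bigcap_i N_i=\{1\}$, the Galois correspondence identifies $L$ with the compositum of the fields $L^{N_i}$. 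Each $L^{N_i}$ is a finite extension of $K(B)$ of degree $\leq e$, so $L^{N_i}\subset K(B)^{(e)}$, and hence $L\subset K(B)^{(e)}$.

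Finally, $F$ is the union of its finite subextensions $L$, each of which lies in $K(B)^{(e)}$, so $F\subset K(B)^{(e)}$ and $d=e=\exp(G)$ answers the corollary. I do not expect a genuine obstacle beyond this: once finiteness of the exponent is secured by Proposition \ref{prop:exposant}, the remainder is the elementary splitting of finite abelian groups into cyclic factors of bounded order, transported through Galois theory. The one point that must be handled with care -- and the reason the shortcut ``$G$ is finite'' is unavailable -- is precisely that $G$ may be infinite, which is why the decomposition has to be carried out over arbitrary finite layers rather than globally on $G$.
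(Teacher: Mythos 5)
Your proof is correct and is essentially the paper's own: the paper likewise first invokes Proposition \ref{prop:exposant} to conclude that $\exp(G)$ is finite, and then cites a proposition of Checcoli--Zannier for exactly the fact you establish inline, namely that an abelian extension whose Galois group has finite exponent $e$ lies in $K(B)^{(e)}$. The only difference is that you supply the elementary argument behind that citation (cyclic decomposition of each finite Galois layer, transported through the Galois correspondence), and that argument, including your care about $G$ possibly being infinite, is sound.
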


\begin{proof}
From proposition \ref{prop:exposant},  $\exp(G)$ is finite. As noted 
in \cite[prop. 2.1]{ChZa}, this implies $F\subset K(B)^{(d)}$ for some $d$ if $G$ is abelian.
\end{proof}

If $F=\overline \Qq(T^{1/\infty})$ is the field generated over $\overline \Qq(T)$ by all $d$-th roots of $T$, with $d\in \Nn^\ast$, the extension $F/\overline{\Qq}(T)$ is abelian of group $G\simeq \widehat \Zz$, it satisfies condition (Ub-loc-d) (as $\overline \Qq$ is algebraically closed)  but $F\not\subset K(B)^{(d)}$ for any $d$ (as $\widehat \Zz$ is of infinite exponent). This shows that the assumption that $F/K(B)$ has the Tchebotarev property cannot be removed in corollary \ref{cor:abelian} or in implication (Ub-loc-d) $\Rightarrow$ (exp-f).

\section{Situations with the Tchebotarev property} \label{sec:situations}

Unless otherwise specified, we assume $\dim(B)>0$ in this section. In this function field context, we will mostly consider extensions $F/K(B)$ that are regular over $K$ ({\it i.e.} $F/K(B)$ is separable and $F\cap \overline K= K$).

 \subsection{Main statements} \label{ssec:examples}

\subsubsection{Main situations} \label{ssec:main-situations}
Theorem \ref{thm:tchebotarev} below is a central result of this paper: it provides various situations where the {Tchebotarev existence property} is satisfied. The proof of theorem \ref{thm:tchebotarev}  
is given in \S \ref{sec:proof-tchebotarev}.

Statement (c) uses the notion of a {\it good place} for the extension $F/K(B)$. It is defined, in de\-fi\-ni\-tion \ref{def:good-place}, by a set of conditions which classically guarantee {\it good reduction} of $F/K(B)$ (residue characteristic prime to $|G|$, etc.). 

\begin{theorem} \label{thm:tchebotarev}
Let $K$ be a field given with a localization set ${\mathcal M}$. A finite regular Galois extension $F/K(B)$ has the {Tchebotarev existence property} 
in each of the three following situations:
\vskip 1mm

\noindent
{\rm (a)} $K$ is a field that is PAC and has cyclic extensions of any degree {\rm (}with ${\mathcal M}=\{0\}${\rm )},
\vskip 1mm

\noindent
{\rm (b)} $K$ is a finite field {\rm (}with ${\mathcal M}=\{0\}${\rm )},
\vskip 1mm

\noindent
{\rm (c)} there exists a non trivial discrete valuation $v\in {\mathcal M}$ that is good for the extension $F/K(B)$ and such that the residue field $\kappa_v$ is finite, or is PAC, perfect and has cyclic extensions of any degree. 
\end{theorem}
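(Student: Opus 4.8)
The plan is to prove the three cases by reducing each to a concrete existence statement about cyclic, unramified residue extensions of prescribed degree, and the unifying idea is to exploit the geometry of $B$ together with good reduction. Fix $g \in G$ and set $d = \mathrm{ord}(g)$. I want to produce a local field $k_v$ over $K$ and an unramified local point $t_0 \in B(k_v) \setminus D$ whose Frobenius subgroup is conjugate to $\langle g \rangle$, hence cyclic of order $d$.

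For case (a), since $\mathcal{M} = \{0\}$ the only valuation is trivial and all local fields are $K$ and its finite extensions, so "unramified" is vacuous (footnote to definition of $k_v$-unramified). Here I would use the defining property of PAC fields: a geometrically integral $K$-variety always has a $K$-rational point. The strategy is to build a suitable intermediate cover whose fiber realizes $\langle g \rangle$. Concretely, let $E$ be the fixed field $F^{\langle g \rangle}$, so that $F/E$ is cyclic of degree $d$ with group $\langle g \rangle$. The curve (or variety) attached to $E$ is geometrically integral over $K$ because $F/K(B)$ is regular, and one wants a $K$-point $t_0$ of the appropriate subvariety where the fiber of $F$ stays connected, realizing $\langle g \rangle$ as decomposition group. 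The PAC hypothesis supplies the $K$-point; the requirement that $K$ have cyclic extensions of every degree guarantees that a degree-$d$ cyclic residue field extension can occur, so the Frobenius is genuinely $\langle g \rangle$ and not a proper subgroup. I would make this precise by a twisting argument: twist $F/K(B)$ so that the desired cyclic specialization becomes the generic fiber of a geometrically integral cover, then apply PAC to find the point.

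Case (b), $K$ finite, reduces to the classical Lang--Weil count. Over a finite field the Frobenius subgroups at rational-type points are automatically cyclic (generated by the Frobenius), so I only need a point whose Frobenius is $\langle g \rangle$. I would pass to a finite extension $k_v = \Ff_{q^m}$ if necessary, consider the cover $F/F^{\langle g\rangle}$, and use Lang--Weil to count points of $B$ over $\Ff_{q^m}$ with prescribed splitting type; for $m$ large the number of points with Frobenius exactly $\langle g\rangle$ is positive. This is the function-field analog of the classical effective Chebotarev over finite fields, and I expect it to be essentially standard.

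Case (c) is where the real work lies and is the main obstacle: I would reduce it to case (b) via good reduction. The good place $v \in \mathcal{M}$ with finite (or PAC with cyclic extensions of all degrees) residue field $\kappa_v$ lets me reduce the extension $F/K(B)$ modulo $v$ to an extension $\overline F/\kappa_v(\overline B)$ with the same Galois group $G$, with control on the branch locus (this is exactly what "good place" buys us in definition \ref{def:good-place}). Apply case (b) (or case (a) for the PAC residue field) to the reduced extension to obtain a residue point $\overline{t_0}$ with Frobenius $\langle g\rangle$ over $\kappa_v$; then lift $\overline{t_0}$ to a local point $t_0 \in B(k_v) \setminus D$ over a suitable unramified extension $k_v$ of $K_v$ by Hensel's lemma, the lift being $k_v$-unramified because the reduction is good and the fiber extension is unramified. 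The delicate points are: ensuring the lifted point avoids $D$ (controlled by good reduction), ensuring the Frobenius of the lift matches the Frobenius of $\overline{t_0}$ (compatibility of decomposition groups under specialization, which is where I expect to invoke a twisting lemma in the spirit of the \texttt{twisting lemma} environment declared in the preamble), and ensuring cyclicity is preserved (automatic by remark \ref{rem:unr-loc-deg-bounded} since $\Gal(k_v^{\ur}/k_v)$ is pro-cyclic). I would organize the proof so that the genuinely new content is the good-reduction transfer in (c), with (a) and (b) serving as the base cases it rests upon.
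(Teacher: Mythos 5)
Your proposal is correct and is essentially the paper's own proof: the paper likewise handles all three cases through the twisting lemma, producing the required rational point on the twisted cover via Zariski-density of rational points (PAC) in case (a), via Lang--Weil estimates after a scalar extension $\Ff_{q_0}\subset \Ff_{q_0^m}$ in case (b), and in case (c) via good reduction together with Hensel lifting of a point of the special fiber over an unramified extension $k_v/K_v$ whose residue field carries a cyclic extension of group $\langle g\rangle$. The only cosmetic difference is that the paper packages the case-(c) mechanism (twist, reduce, find a residue point, lift) as a citation to a ready-made local specialization result from its reference on twisted covers, whereas you unfold that mechanism by hand and phrase it as a reduction to cases (a) and (b) over the residue field.
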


On the other hand, there are examples for which the Tchebotarev existence property does not hold in general: for instance if $K$ is algebraically closed or if $K=\Rr$, as then, for any regular Galois extension $F/K(B)$, all specializations are of degree $1$ or $2$.

\subsubsection{More concrete examples in situations {\rm (a)-(c)}} \label{ssec:more-concrete-ex}

\paragraph{\it Situation {\rm (a)} }
\vskip 1mm

Recall that a field $k$ is said to be PAC if every non-empty geometrically irreducible $k$-variety has a Zariski-dense set of $k$-rational points. Classical results show that in some sense PAC fields are ``abundant'' \cite[theorem 18.6.1]{FrJa} and a concrete example is the field $\Qq^\tr(\sqrt{-1})$ (with $\Qq^\tr$ the field of  totally real numbers (algebraic numbers such that all conjugates are real)). 

There are many fields as in situation (a) of theorem \ref{thm:tchebotarev}. For example, it is a classical result \cite[corollary 23.1.3]{FrJa} that 
\vskip 1mm

\noindent
(*) {\it for every projective profinite group ${\mathcal G}$, there exists a PAC field $K$ such that $\Gabs_K \simeq {\mathcal G}$}.
\vskip 1mm

\noindent
For ${\mathcal G}$ chosen so that $\widehat \Zz$ is a quotient, the field $K$ satisfies condition (a) of theorem \ref{thm:tchebotarev}. Any non-principal ultraproduct of distinct finite fields is a specific example of a perfect PAC field with absolute Galois group isomorphic to $\widehat \Zz$ \cite[proposition 7.9.1]{FrJa}. 

Examples of  subfields of  $\overline{\Qq}$ can be given. The PAC field $\Qq^\tr(\sqrt{-1})$ is one: 
indeed it is also known to hilbertian and, consequently (see proposition \ref{prop:compare-for-PAC}), its absolute Galois group is a free profinite group of countable rank. It is also known that for every integer $e\geq 1$, for almost all ${\mathbf{\sigma}}=(\sigma_1,\ldots,\sigma_e)\in \Gabs_\Qq^e$, the fixed field $\overline{\Qq}^{\mathbf{\sigma}}$ of ${\mathbf{\sigma}}$ in $\overline \Qq$ is PAC and $\Gabs_{\overline{\Qq}^\sigma}$ is isomorphic to the free profinite group $\widehat F_e$ of rank $e$ \cite[theorems 18.5.6 \& 18.6.1]{FrJa}; here ``almost all'' is to be understood as ``off a subset of measure $0$'' for the Haar measure on $\Gabs_\Qq^e$. 
We note that for such fields $\overline{\Qq}^{\mathbf{\sigma}}$, a related Tchebotarev property already
appeared in \cite{Jarden-Tchebotarev}.
\vskip 1,5mm

\paragraph{\it Situation {\rm (b)} }
\vskip 1mm
The situation ``$K$ finite'' is rather classical. 
There even exist quantitative forms of the property, similar to the Tchebotarev density property for number fields;
see \cite{Weil_hermann}, \cite{Serre-zetaL}, \cite{fried-hilbert}, \cite{ekedahl}, \cite[\S 6]{FrJa}. Our approach also leads in fact  to the quantitative forms; see \cite[\S 3.5]{DEGha2} and \cite[\S 4.2]{DeLe2}. We focus here
on the existence part which also applies to infinite fields.
\vskip 1,5mm

\paragraph{\it Situation {\rm (c)} }
\vskip 1mm
The following statement provides examples. By the phrase used in (c1) and (c2) that {\it the branch locus $D$ is  good} (over $K$), we mean that it is a sum of irreducible smooth divisors with normal crossings over $K$. This is automatic if $B$ is a curve, or if, as in (c3) and (c4), $K$ has a 
place that is good for the extension $F/K(B)$ (definition \ref{def:good-place}).

\begin{corollary} \label{cor:examples} 
A finite regular Galois extension $F/K(B)$ has the Tchebotarev existence property in each of the following situations:
\vskip 1,5mm

\noindent
{\rm (c1)} $K$ is a number field and the branch locus $D$ is good,
\vskip 1,5mm

\noindent
{\rm (c2)} $K=\kappa(x)$, ${\rm char}(\kappa)=p\hskip2pt \not| \hskip2pt |G|$, 
the branch locus $D$ is good, and 

\noindent
\hskip 4mm
{\tenrm (c2-finite)} $\kappa$ is a finite field, or

\noindent
\hskip 4mm
{\tenrm (c2-PAC)} $\kappa$ is perfect, PAC and has cyclic extensions of any degree.

\vskip 1,5mm

\noindent
{\rm (c3)} $K=K_v$ is the completion of a number field at some finite place $v$ that is good for $F/K(B)$,
\vskip 1,5mm

\noindent
{\rm (c4)} $K=\kappa((x))$, the $x$-adic valuation is good for $F/K(B)$ and 

\noindent
\hskip 4mm {\tenrm (c4-finite)} $\kappa$ is a finite field, or

\noindent
\hskip 4mm
{\tenrm (c4-PAC)} $\kappa$ is perfect, PAC and has cyclic extensions of any degree.
\end{corollary}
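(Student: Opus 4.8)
The plan is to derive Corollary \ref{cor:examples} directly from Theorem \ref{thm:tchebotarev} by, in each case, producing a discrete valuation (a place in $\mathcal M$) whose residue field is of the type required by Theorem \ref{thm:tchebotarev}(c), and then checking that this place is \emph{good} for $F/K(B)$ in the sense of Definition \ref{def:good-place}. The key mechanism throughout is that goodness of the branch locus plus a well-chosen base place guarantees good reduction of the cover $F/K(B)$, and the residue extension then inherits the Tchebotarev property from the finite or PAC residue field.

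First I would treat (c1) and (c2), which are the ``global'' cases. Here $K$ is either a number field or $\kappa(x)$, and the hypothesis on $D$ is that the branch locus is good over $K$ (a sum of irreducible smooth divisors with normal crossings). The idea is that for all but finitely many places $v\in\mathcal M$, reduction mod $v$ preserves smoothness and normal crossings of $D$, leaves $|G|$ invertible in $\kappa_v$, and so yields a good place in the sense of Definition \ref{def:good-place}. For (c1) I would invoke that cofinitely many finite places $v$ of a number field have residue field $\kappa_v=\Ff_{q}$ finite, with $p=\mathrm{char}(\kappa_v)$ avoiding the finitely many primes dividing $|G|$ or entering the bad reduction of $D$; picking any such $v$ satisfies Theorem \ref{thm:tchebotarev}(c) with $\kappa_v$ finite. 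For (c2), $\kappa(x)$ with $\mathrm{char}(\kappa)=p\nmid|G|$ comes equipped with the localization set of all $(x-x_0)$-adic valuations with $x_0\in\Pp^1(\kappa)$; the residue field at each such place is $\kappa$ itself. Thus I would choose any one of these places (after discarding the finitely many at which $D$ reduces badly, using that $\kappa$ is infinite in the PAC subcase and simply picking a good $x_0\in\Pp^1(\kappa)$ otherwise) and apply Theorem \ref{thm:tchebotarev}(c) with residue field $\kappa$, which is finite in (c2-finite) or PAC-perfect-with-cyclic-extensions in (c2-PAC). The condition $p\nmid|G|$ is exactly what ensures tame, hence good, reduction.

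The cases (c3) and (c4) are the ``local'' ones, where $K$ is already complete: $K=K_v$ a $p$-adic field or $K=\kappa((x))$. Here $\mathcal M=\{v\}$ is a single non-trivial discrete valuation, and the hypotheses state outright that this valuation is good for $F/K(B)$. In (c3) the residue field of the $p$-adic field $K_v$ is a finite field, so Theorem \ref{thm:tchebotarev}(c) applies immediately with $\kappa_v$ finite. In (c4) the residue field of $\kappa((x))$ is $\kappa$, which is finite in (c4-finite) or perfect PAC with cyclic extensions of every degree in (c4-PAC); again Theorem \ref{thm:tchebotarev}(c) applies directly. So (c3) and (c4) are essentially immediate once one notes the residue field and matches it against clause (c) of the theorem.

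I expect the only genuine obstacle to be the bookkeeping in (c1) and (c2): verifying that the abstract notion of \emph{good place} in Definition \ref{def:good-place} is indeed satisfied for all but finitely many $v$ under the stated goodness of $D$. This amounts to showing that the standard good-reduction conditions (residue characteristic prime to $|G|$, the reduction of $D$ remaining a normal-crossings divisor of smooth components, and integrality/flatness of the relevant models over the local ring at $v$) hold away from a finite exceptional set. This is classical good-reduction theory for covers, but it is the one step requiring care, since it is where the hypothesis ``$D$ is good over $K$'' is actually consumed; by contrast, the local cases (c3)--(c4) and the matching of residue fields to Theorem \ref{thm:tchebotarev}(c) are routine.
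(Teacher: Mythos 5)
Your proposal is correct and takes essentially the same route as the paper: there, (c3) and (c4) are likewise dispatched as immediate special cases of Theorem \ref{thm:tchebotarev}(c), and (c1)--(c2) are handled by the same observation that, once the branch locus $D$ is good over $K$, only finitely many places of ${\mathcal M}$ can fail the conditions of Definition \ref{def:good-place}, so a good place with finite (resp. PAC) residue field exists and Theorem \ref{thm:tchebotarev}(c) applies. The one spot where you are exactly as terse as the paper is (c2-finite), where $\Pp^1(\kappa)$ is itself finite, so ``discard the finitely many bad places'' does not by itself guarantee a good $x_0$ remains; the paper glosses over this in the same way (it asserts ${\mathcal M}$ is infinite in case (c2)), so this is a shared, not a new, imprecision.
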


\begin{proof} (c3) and (c4) are  obvious special cases of theorem \ref{thm:tchebotarev} (c). This is true too for (c1) and (c2): the main point is that in these cases, the localization set ${\mathcal M}$ contains infinitely many places and that only finitely many can be bad, which is clear from definition \ref{def:good-place} (under the assumption that the branch locus $D$ is good over $K$).
\end{proof}

 \subsubsection{The strict variant} \label{ssec:strict-variant}
 
 \begin{addenda}[to theorem \ref{thm:tchebotarev}] \label{add:tchebotarev}
 The strict Tchebotarev existence pro\-per\-ty is satisfied in the number field situation {\rm (c1)} and the PAC si\-tua\-tions {\rm (a)}, {\rm (c2-PAC)}, {\rm (c4-PAC)} from theorem \ref{thm:tchebotarev} and corollary \ref{cor:examples}.
 \end{addenda}
 
 Finite fields are typical examples over which the non-strict variant holds but the strict variant does not: for example if $p$ is an odd prime, the extension $F/\Ff_p(T)$ given by the polynomial $Y^2-Y - (T^p-T)$ has trivial specializations at all points $t_0 \in \Ff_p$ and so at all unbranched points $t_0\in \Pp^1(\Ff_p)$ ($\infty$ {\it is} a branch point). A similar argument (given in \S \ref{ssec:general-situation}) shows that over $\Qq_p$ the strict variant does not hold either. However we do not know whether
the non-strict variant holds over $\Qq_p$, {\it i.e.} if the condition ``$v$ good'' can be removed in corollary \ref{cor:examples} (c3).
%


\subsubsection{Equivalence between {\rm (Ub-loc-d)} and {\rm (exp-f)}}
Proposition \ref{prop:exposant} provides general links between conditions  {\rm (Ub-loc-d)} and {\rm (exp-f)}. Combining it with theorem \ref{thm:tchebotarev} and corollary \ref{cor:examples}, we obtain the following statement,  in the case $\dim(B)>0$. 

\begin{corollary} 
For a regular Galois extension $F/K(B)$, conditions {\rm (Ub-loc-d)} and {\rm (exp-f)} are equivalent in each of the following situations:
\vskip 1mm

\noindent
{\rm (a\hskip 2pt $\sharp$)} $K$ is a PAC perfect field such that $\Gabs_K$ is of finite rank and has every cyclic group as a quotient,
\vskip 1mm

\noindent
{\rm (b)} $K$ is a finite field,
\vskip 1mm

\noindent
{\rm (c1)} $K$ is a number field and the branch locus $D$ is good,
\vskip 1mm

\noindent
{\rm (c2-PAC\hskip 2pt $\sharp$)} $K=\kappa(x)$ with $\kappa$ a PAC field of characteristic $0$ such that $\Gabs_\kappa$ is of finite rank and has every cyclic group as a quotient, and the branch locus $D$ is good,
\vskip 1mm

\noindent
{\rm (c3)} $K=K_v$ is the completion of a number field at some finite place $v$ that is good for $F/K(B)$,
\vskip 1mm

\noindent
{\rm (c4-PAC\hskip 2pt $\sharp$)} $K=\kappa((x))$ if the $x$-adic valuation is good for $F/K(B)$ and for $\kappa$ a PAC field of characteristic $0$ such that $\Gabs_\kappa$ is of finite rank and has every cyclic group as a quotient.
\end{corollary}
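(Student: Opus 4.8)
The plan is to read both implications off Proposition~\ref{prop:exposant}, whose two halves rest on different hypotheses: (Ub-loc-d) $\Rightarrow$ (exp-f) requires $F/K(B)$ to have the Tchebotarev existence property, whereas the reverse implication (exp-f) $\Rightarrow$ (Ub-loc-d) requires the localization set $\mathcal{M}$ to be \emph{standard} in the sense of Definition~\ref{def:standard}. Thus for each of the six situations it suffices to verify these two conditions separately, and the whole argument reduces to matching each situation against Theorem~\ref{thm:tchebotarev}, Corollary~\ref{cor:examples}, and the list of standard examples following Definition~\ref{def:standard}. (For possibly infinite $F/K(B)$ the Tchebotarev property is read off its finite Galois subextensions, each of which is again regular and falls under the same situation, so no extra work is needed there.)

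First I would settle the Tchebotarev direction. Each situation here is built on a situation already covered: (a$\sharp$) refines situation (a), once one notes that ``$\Gabs_K$ has every cyclic group as a quotient'' is just a restatement of ``$K$ has cyclic extensions of any degree''; (b), (c1) and (c3) are verbatim (b), (c1), (c3) of Theorem~\ref{thm:tchebotarev} and Corollary~\ref{cor:examples}; and (c2-PAC$\sharp$), (c4-PAC$\sharp$) refine (c2-PAC), (c4-PAC), the characteristic-$0$ hypothesis making the condition $p\nmid|G|$ vacuous. In every case the Tchebotarev existence property holds, so (Ub-loc-d) $\Rightarrow$ (exp-f) is immediate from Proposition~\ref{prop:exposant}. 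The supplementary hypotheses carried by the $\sharp$ versions (perfectness, finite rank of $\Gabs$, characteristic $0$) are not used at this stage.

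Next I would verify standardness, which is where those supplementary hypotheses enter. For (b) and (a$\sharp$) one has $\mathcal{M}=\{0\}$, so the unique completion is $K$ itself -- a finite field, respectively a perfect field with $\Gabs_K$ of finite rank -- and both appear on the standard list. For (c1) the field is a number field and for (c3) a $p$-adic field, both again explicitly standard. For (c2-PAC$\sharp$) and (c4-PAC$\sharp$) the fields $\kappa(x)$ and $\kappa((x))$ with $\kappa$ of characteristic $0$ and $\Gabs_\kappa$ of finite rank are listed as standard word for word. Hence $\mathcal{M}$ is standard in all six situations, and (exp-f) $\Rightarrow$ (Ub-loc-d) follows from the second half of Proposition~\ref{prop:exposant}.

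The one genuinely delicate point is this standardness verification, and within it the uniform bound on the ranks of the groups $\Gabs_{k_v}$: since the rank of the absolute Galois group of a local field can in principle grow with its degree, this bound is not automatic and is exactly what the $\sharp$ hypotheses are designed to supply. In the PAC cases it is imposed directly by demanding $\Gabs_\kappa$ of finite rank; in the number- and finite-field cases it comes instead from the fact that the relevant completions have degree bounded by an invariant of $K$ alone, which forces the ranks to be uniformly bounded. Everything else is bookkeeping: once the Tchebotarev property and standardness are in hand for a given situation, the equivalence of (Ub-loc-d) and (exp-f) is obtained by a single application of Proposition~\ref{prop:exposant}.
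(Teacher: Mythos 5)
Your proof is correct and is essentially the paper's own: the paper likewise treats each situation as the conjunction of the corresponding case of theorem \ref{thm:tchebotarev} or corollary \ref{cor:examples} (giving the Tchebotarev property, hence {\rm (Ub-loc-d)} $\Rightarrow$ {\rm (exp-f)}) with standardness of ${\mathcal M}$ in the sense of definition \ref{def:standard} (giving the converse), both fed into proposition \ref{prop:exposant}. The only point the paper makes that you omit is the explicit reason the characteristic-$0$ rather than finite-residue-field variants appear: for ${\rm char}(\kappa)=p>0$ the group $\Gabs_{\kappa((x))}$ is not of finite rank (e.g.\ the Artin--Schreier polynomials $X^{p^n}-X-1/x$ give $(\Zz/p\Zz)^n$-quotients over $\kappa((x))$), so standardness genuinely fails and situations (c2-finite), (c4-finite) cannot be included.
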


\begin{proof}
Each situation corresponds to the conjunction of the corresponding situation in theorem \ref{thm:tchebotarev} or corollary \ref{cor:examples} and the condition from proposition \ref{prop:exposant} that the localization set ${\mathcal M}$ is standard (definition \ref{def:standard}).
It is well-known that for $\kappa$ of characteristic $p>0$, $\Gabs_{\kappa((x))}$ is not of finite type: for example, if $\kappa$ is algebraically closed, the Galois group of $X^{p^n}-X - (1/x)$ over $\kappa((x))$ is $(\Zz/p\Zz)^n$ ($n\geq 1$).
That is why situations (c2-finite) and (c4-finite) from corollary \ref{cor:examples} do not appear here and $\kappa$ is of characteristic $0$ in {\rm (c2-PAC\hskip 2pt $\sharp$)} and {\rm (c4-PAC\hskip 2pt $\sharp$)}.
\end{proof}


%

\subsection{Proof of theorem \ref{thm:tchebotarev} and of its addendum \ref{add:tchebotarev}} \label{sec:proof-tchebotarev}
A central ingredient will be  \cite{DEGha}. 
We will notably use two statements called there {\it twisting lemma} 
and {\it local specialization result}. Both are answers to the question as to whether a Galois extension $E/k$ is a specialization of a Galois $k$-cover $f:X\rightarrow B$.

Fix a finite Galois extension $F/K(B)$, regular over $K$, with group $G$ and branch locus $D$. 
Through the function field functor, it corresponds to a regular Galois $K$-cover $f:X \rightarrow B$. We use the cover viewpoint in the proof. From the Purity of Branch Locus, $f$ is \'etale above $B\setminus D$. 



\subsubsection{Good places} \label{sssec:good reduction}  

Given a local field $k_v$ over $K$, denote the va\-lua\-tion ring by $A_v$, the valuation ideal by ${\frak p}_v$, the residue field by $\kappa_v$, assumed to be perfect, its order $|\kappa_v|$ by $q_v$ and its characteristic by $p_v$. Denote also the $k_v$-cover $f\otimes_Kk_v$ by $f_v:X_v\rightarrow B_v$.

If $B$ has an integral smooth projective model ${\mathcal B}_v$ over $A_v$, we denote by ${\mathcal F}_v: {\mathcal X}_v \rightarrow {\mathcal B}_v$ the morphism corresponding to the normalization  of ${\mathcal B}_v$ in $k_v(X)$, its special fiber by ${\mathcal F}_{v,\spe}: {\mathcal X}_{v,\spe} \rightarrow {\mathcal B}_{v,\spe}$
and the Zariski closure of $D$ in ${\mathcal B}_v$ by ${\mathcal D}_v$. 

Also recall that $f$ is said to have {\it no vertical ramification at $v$} if ${\mathcal F}_v: {\mathcal X}_v\rightarrow {\mathcal B}_v$ is unramified above ${\mathfrak p}_v$ viewed as a prime divisor of ${\mathcal B}_v$. 

\begin{definition} \label{def:good-place} A place $v$ of $K$ is said to be {\it good} for $F/K(B)$ if
\vskip 1mm

\noindent
(a) $B$ has an integral smooth projective model ${\mathcal B}_v$ over $A_v$,
\vskip 1mm

\noindent
(b) $p_v=0$ or $p_v$ does not divide the order of $G$,
\vskip 1mm

\noindent
(c) each irreducible component of ${\mathcal D}_v$ is smooth over $A_v$ and  ${\mathcal D}_v \cup  {\mathcal B}_{v,\spe}$ is a sum of irreducible regular divisors with normal crossings over $A_v$,
\vskip 1mm

\noindent
(d) there is no vertical ramification at $v$ in the cover $f$.
\end{definition}

The regular $k_v$-cover $f_v$ has then {\it good reduction at $v$}: specifically, the special fiber ${\mathcal F}_{v,\spe}: {\mathcal X}_{v,\spe} \rightarrow {\mathcal B}_{v,\spe}$ is a regular cover over the residue field $\kappa_v$ with group $G$ and branch divisor ${\mathcal D}_{v,\spe}$; this follows from classical results of Grothendieck as explained in \cite[\S \S 2.4.1-2.4.4]{DEGha}. 

In the typical situation $k_v=\Qq_p$ and ${\mathcal B} = \Pp^1_{\Zz_p}$, condition (c) amounts to the branch divisor ${\bf t}$ being \'etale at $p$, and more specifically to no two  branch points $t_i,t_j \in {\overline \Qq_p} \cup \{\infty\}$ \textit{coalescing} at $v$; and coalescing at $v$ means that $|t_i|_{\overline v} \le 1$, $|t_j|_{\overline v} \le 1$ and $|t_i-t_j|_{\overline v} < 1$, or else $|t_i|_{\overline v} \ge 1$, $|t_j|_{\overline v} \ge 1$ and $|t_i^{-1}-t_j^{-1}|_{\overline v} < 1$, where $\overline v$ is any prolongation of $v$ to ${\overline \Qq_p}$.  As to the non-vertical ramification condition (d), a practical test is this:  if an affine equation $P(t,y)=0$ of $X$ is given with $t$ corresponding to $f$ and $P\in \Zz_p[t,y]$ monic in $y$, there is no vertical ramification if the discriminant $\Delta(t)$ of $P$ with respect to $y$ is non-zero modulo $p$.

\subsubsection{Proof of theorem \ref{thm:tchebotarev} and of addendum \ref{add:tchebotarev}}
\label{ssec:proofs}
Let $g\in G$. The strategy is to construct a ${\mathcal M}$-local field $k_v$ over $K$ such that 

\noindent
(i) there exists an unramified Galois extension $E/k_v$ with Galois group isomorphic to the subgroup $\langle g\rangle \subset G$, and 

\noindent
(ii) the extension $E/k_v$ is a specialization of the extension $F\hskip 1ptk_v/k_v(B)$ at some point $t_0 \in B(k_v)\setminus D$. 

\noindent
We will conclude that the group $\Gal((F\hskip 1pt k_v)_{t_0}/k_v)$, {\it i.e.}, the Frobenius subgroup of $F/K(B)$ at $t_0$ over $k_v$, is cyclic and conjugate to $\langle g \rangle$ in $G$.

To achieve (ii) we will use the {\it twisting lemma} from \cite{DEGha}, which says the following. Let $\varphi: \Gabs_{k_v} \rightarrow \langle g\rangle$ be an epimorphism such that the fixed field $(k_v^{\sep})^{{\sevenrm ker}(\varphi)}$ is an extension $E$ of $k_v$ as in (i). Then there is a regular
$k_v$-cover $\widetilde f^{\varphi}_v: \widetilde X_v^{\varphi} \rightarrow B_v$ (with $B_v=B\otimes_K k_v$) such that 
\vskip 1,5mm

\noindent
(*) {\it condition {\rm (ii)} holds if and only if there exists a $k_v$-rational point on $\widetilde X_v^{\varphi}$ not lying above any point in the branch locus $D$.}
\vskip 1,5mm

\noindent
The cover $\widetilde f^{\varphi}_v: \widetilde X_v^{\varphi} \rightarrow B_v$ is obtained by ``twisting'' $Fk_v/k_v(B)$, viewed as a regular Galois $k_v$-cover $f_v: X_v \rightarrow B_v$, by the epimorphism $\varphi$, whence the terminology and the notation. 

The proof of (a) follows at once. Take for $v$ the trivial valuation on $K$ (for which $K_v=K$). From the assumption an extension $E/K$ as in (i) exists, and by definition of PAC fields, the set $\widetilde X_v^{\varphi}(K)$ is Zariski-dense, and so (ii) holds as well\footnote{For PAC fields, stronger results can be proved for which $\langle g\rangle$ can be replaced by any subgroup of $G$; see \cite[corollary 3.4]{DEGha2}.}. Furthermore it is the {strict Tchebotarev existence property} (and so addendum \ref{add:tchebotarev} (a)) that has been proved. 

\begin{remark} \label{rem:nonstrict_PAC}
The non-strict Tchebotarev existence property holds under a weaker condition: the argument above shows that it is sufficient that every cyclic subgroup $C$ be the Galois group of some finite extension $E_C/k_C$ with $k_C$ a finite extension of $K$.
\end{remark}

The proof of (b) goes along similar principles but with the Lang-Weil estimates replacing the PAC property. More precisely assume that $K$ is the field $\Ff_{q_0}$ with $q_0$ elements. Pick 
a suitably large integer $m$; more specifically $q=q_0^m$ should be bigger than the constant $c$ from \cite[corollary 3.5]{DEGha2}, which depends only on $G$, $B$ and $D$. Then from that result, if $d$ is the order of $g$, the extension $\Ff_{q^{d}}/\Ff_q$ is the specialization of $F\hskip 2pt \Ff_q/\Ff_q(B)$ at some point $t_0\in B(\Ff_q) \setminus D$. So the extension
$F\hskip 2pt \Ff_q/\Ff_q(B)$ satisfies conditions (i) and (ii) above for $v$ the trivial place on $K=\Ff_{q_0}$ and $k_v=\Ff_q$. We note that we have used a scalar extension (from $\Ff_{q_0}$ to $\Ff_q$) and only proved the {(non-strict) Tchebotarev property}.

The proof of (c) relies on proposition 2.2 from \cite{DEGha}, which we apply to the $k_v$-cover $f_v\otimes_{K_v}k_v$ and to the unramified homomorphism $\varphi:\Gabs_{k_v} \rightarrow \langle g \rangle \subset G$ defined as follows. If the residue field $\kappa_v$ is PAC, take $k_v=K_v$ and if it is a finite field $\Ff_{q_0}$ with $q_0$ elements, take $k_v$ equal to the unique unramified extension of $K_v$ with residue extension $\Ff_{q_0^m}/\Ff_{q_0}$ with $q=q_0^m$  bigger than the constant $c$ from \cite[lemma 2.4]{DEGha} (which is some version of the constant $c$ used above).
In both cases, denote the residue field of $k_v$ by $\tilde \kappa_v$. From the hypotheses, the field $\tilde \kappa_v$ has a Galois extension $\varepsilon_v/\tilde \kappa_v$ of group $\langle g \rangle$. Let $E_v/k_v$  be the unique unramified extension with residue extension $\varepsilon_v/\tilde \kappa_v$ and $\varphi:\Gabs_{k_v} \rightarrow \langle g \rangle$ be an epimorphism such that the fixed field $(k_v^{\sep})^{{\sevenrm ker}(\varphi)}$ is $E_v$. 

Proposition 2.2 from \cite{DEGha} has two assumptions which are labelled (good-red) and ($\kappa$-big-enough). The former is here covered by the assumption that $v$ is good for $F/K(B)$. The latter holds as well: this follows from the PAC property if $\kappa_v$ is PAC, and from \cite[lemma 2.4]{DEGha} if $\kappa_v$ is finite of order $> c$. Conclude then from \cite[proposition 2.2]{DEGha} that there exists $t_0\in B(k_v)\setminus D$ such that the specialization $(F\hskip 1pt k_v)_{t_0}/k_v$ is conjugate to $E_v/k_v$. In particular $\Gal((F\hskip 1pt k_v)_{t_0}/k_v)$ is cyclic and conjugate to $\langle g \rangle$ in $G$. Furthermore we have proved the {strict Tchebotarev property} in the case of a PAC residue field (and so addendum \ref{add:tchebotarev} (c2-PAC) and (c4-PAC)) but only the {non-strict Tchebotarev property} in the case of a finite residue field.

It remains to show addendum \ref{add:tchebotarev} (c1). That is, to prove the strict Tchebotarev property assuming that $K$ is a number field and the branch locus $D$ is good. Denote by ${\mathcal B}$ an integral projective model of $B$ over the ring $R$ of integers of $K$; ${\mathcal B}$ is smooth over the completion $R_v$ for all finite places of $K$ but in a finite subset $S_0$. Pick a place $v$ of $K$ that is good (in particular $v\notin S_0$) and has a residue field $\kappa_v$ of order bigger than the constant $C(f,{\mathcal B})$ from \cite[lemma 3.1]{DEGha}. As above, assumptions (good-red) and ($\kappa$-big-enough) from \cite[proposition 2.2]{DEGha} are guaranteed and it can be concluded that there exists $t_0\in B(k_v)\setminus D$ such that the specialization $(F\hskip 1pt K_v)_{t_0}/K_v$ is conjugate to 
the unique unramified extension $E_v/K_v$ of degree the order of $g$. $\square$

\subsection{A further example} We illustrate our method with a last situation where the residue 
fields are neither PAC nor finite. A typical example we have in mind in the statement below is this: $K$ is the field $k_0((\theta)) (x)$ with $x$ and 
$\theta$ two indeterminates and the localization set consists of all $(x-x_0)$-adic valuations with $x_0\in \Pp^1(k_0((\theta)))$.

\begin{theorem} \label{cor:further-example} 
Assume $K$ is given with a localization set ${\mathcal M}$ that contains a non trivial discrete 
valuation $v\in {\mathcal M}$ such that
\vskip 1mm

\noindent
{\rm (a)} the residue field $\kappa_v$ is a complete field for a non trivial discrete valuation $w$ 
with a residue field  ${\kappa}_{v,w}$ that is perfect, PAC and has cyclic extensions of any degree.
\vskip 1mm

\noindent
Then a  finite regular Galois extension $F/K(B)$ has the strict Tchebotarev existence property 
if $G$ is of trivial center and $B$ has an integral smooth projective $A_v$-model 
${\mathcal B}_v$ such that
\vskip 1mm

\noindent
{\rm (b)}  $v$ is good for this model of $F/K(B)$,
\vskip 1mm

\noindent
{\rm (c)} the place $w$ is good for the extension $\kappa_v({\mathcal X}_{v,0})/\kappa_v({\mathcal B}_{v,0})$ ({\it i.e.}, the function field extension of the special fiber of ${\mathcal F}_v: {\mathcal X}_v \rightarrow {\mathcal B}_v$).
\end{theorem}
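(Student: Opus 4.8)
The plan is to follow the strategy of the proof of theorem~\ref{thm:tchebotarev}~(c) from \S\ref{ssec:proofs}, but to run the local specialization machinery of \cite{DEGha} \emph{twice}, in a nested fashion: once over $K_v$ and once over its residue field $\kappa_v$. Fix $g\in G$ of order $d$. As in \S\ref{ssec:proofs} it suffices to produce, with $k_v=K_v$ (so that no finite extension intervenes and the \emph{strict} property is obtained), an unramified epimorphism $\varphi\colon\Gabs_{K_v}\to\langle g\rangle$ whose fixed field $E_v=(K_v^{\sep})^{\ker(\varphi)}$ is realized as a specialization $(FK_v)_{t_0}/K_v$ at some $t_0\in B(K_v)\setminus D$. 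First I would build such an $E_v$. Since $K_v$ is complete for $v$ with perfect residue field $\kappa_v$, unramified extensions of $K_v$ correspond to separable extensions of $\kappa_v$; and since $\kappa_v$ is in turn complete for $w$ with residue field $\kappa_{v,w}$ having cyclic extensions of every degree, $\kappa_v$ itself has cyclic $w$-unramified extensions of every degree. Pulling a cyclic degree-$d$ extension of $\kappa_{v,w}$ up to $\kappa_v$ and then to $K_v$ yields the desired cyclic unramified $E_v/K_v$ of degree $d$ together with the epimorphism $\varphi$.

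Next I would apply the twisting lemma of \cite{DEGha} at the top level: it produces a regular $K_v$-cover $\widetilde f^\varphi_v\colon\widetilde X^\varphi_v\to B_v$ for which condition~(ii) holds if and only if $\widetilde X^\varphi_v$ carries a $K_v$-rational point off $D$. Here the good reduction hypothesis~(b) enters: because $v$ is good for $F/K(B)$ and $\varphi$ is unramified, the twisted cover should have good reduction at $v$, its special fiber being (this is the point discussed below) the twist of the special fiber cover ${\mathcal X}_{v,0}\to{\mathcal B}_{v,0}$ by the reduced epimorphism $\overline\varphi\colon\Gabs_{\kappa_v}\to\langle g\rangle$. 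Since $K_v$ is complete, a \emph{smooth} $\kappa_v$-point of this special fiber lying off the reduced branch locus lifts, by Hensel's lemma, to a $K_v$-point of $\widetilde X^\varphi_v$ off $D$. Thus the whole problem is reduced to exhibiting such a $\kappa_v$-point.

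For that I would invoke theorem~\ref{thm:tchebotarev}~(c) and addendum~\ref{add:tchebotarev} \emph{at the lower level}, applied to the regular $\kappa_v$-cover ${\mathcal X}_{v,0}\to{\mathcal B}_{v,0}$ with group $G$: the field $\kappa_v$ is complete for $w$, the valuation $w$ is good for this cover by hypothesis~(c), and its residue field $\kappa_{v,w}$ is perfect, PAC and has cyclic extensions of every degree by hypothesis~(a). The cited results therefore give the \emph{strict} Tchebotarev property for $\kappa_v({\mathcal X}_{v,0})/\kappa_v({\mathcal B}_{v,0})$, hence a point $\overline t_0\in{\mathcal B}_{v,0}(\kappa_v)\setminus{\mathcal D}_{v,0}$ at which the specialization of ${\mathcal X}_{v,0}\to{\mathcal B}_{v,0}$ is the cyclic degree-$d$ extension $\varepsilon_v/\kappa_v$ cut out by $\overline\varphi$. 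By the twisting lemma over $\kappa_v$ this is exactly a $\kappa_v$-point of the twist of ${\mathcal X}_{v,0}\to{\mathcal B}_{v,0}$ by $\overline\varphi$ lying off the branch locus, that is, the smooth $\kappa_v$-point sought above. Lifting it produces $t_0\in B(K_v)\setminus D$ with $\Gal((FK_v)_{t_0}/K_v)$ cyclic and conjugate to $\langle g\rangle$; since $k_v=K_v$ throughout and $E_v/K_v$ is unramified, this establishes the strict Tchebotarev existence property.

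I expect the main obstacle to be the compatibility asserted in the second paragraph, namely that the special fiber of the top twist $\widetilde f^\varphi_v$ coincides with the twist of the special fiber cover by $\overline\varphi$, together with the descent of the twist to $A_v$ with good reduction. This is precisely where the hypothesis $Z(G)=1$ (trivial center) should be used: the obstruction to the field of moduli of the twisted cover being a field of definition, equivalently to descending the twist compatibly with reduction, is expected to be governed by a class in $H^2(-,Z(G))$, which vanishes when the center is trivial. Checking this compatibility carefully --- so that the Frobenius class read off at the bottom over $\kappa_{v,w}$ is transported unchanged through both reductions to give exactly $\langle g\rangle$ at the top over $K_v$ --- is the delicate technical heart of the argument; the two applications of the twisting lemma and of Hensel's lemma should then be routine.
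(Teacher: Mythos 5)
Your overall skeleton does match the paper's: the same double lifting ($\kappa_{v,w}\to\kappa_v\to K_v$) produces the unramified cyclic extension $E_v/K_v$ and the epimorphism $\varphi$, and the proof is then a two-level run of the specialization machinery of \cite{DEGha}, with the PAC property of $\kappa_{v,w}$ and Hensel's lemma doing the work at the bottom. But there is a genuine gap, located exactly where you place ``the delicate technical heart'': you have misidentified what the hypothesis $Z(G)=1$ is for, and the step it actually guards never appears in your outline. The compatibility you defer to it --- that twisting a good-reduction model by an \emph{unramified} cocycle commutes with passing to the special fiber --- is plain Galois descent along a finite unramified extension of $A_v$ and needs no hypothesis on the center; it already underlies the proof of theorem \ref{thm:tchebotarev}(c), which carries no center assumption, so if it required $Z(G)=1$ that theorem would too. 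Likewise, the $H^2(-,Z(G))$ obstruction you invoke governs the field-of-moduli/field-of-definition question for covers (descent from the separable closure), not the reduction of unramified twists; it is not the relevant issue here.

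What the paper actually does is apply \cite[proposition 2.2]{DEGha} at the top level, and its hypothesis ($\kappa$-big-enough) quantifies over \emph{all} $A_v$-models ${\mathcal F}'\colon{\mathcal X}'\to{\mathcal B}_v$ whose generic and special fibers are only \emph{geometrically} isomorphic to $f_v$ and ${\mathcal F}_{v,0}$ --- not just the explicit twist by $\overline\varphi$. To produce $\kappa_v$-points on such an ${\mathcal X}'_0$ off the branch locus, one needs $w$ to be good for $\kappa_v({\mathcal X}'_0)/\kappa_v({\mathcal B}_{v,0})$; hypothesis (c) gives goodness of $w$ only for ${\mathcal F}_{v,0}$ itself, and while conditions (a), (b), (c) of definition \ref{def:good-place} transfer to any such form (they are geometric), the no-vertical-ramification condition (d) does not. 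This is the single point where the paper uses $Z(G)=1$: Beckmann's theorem \cite{Beckmann} says that under (a), (b), (c), absence of vertical ramification is automatic when the center is trivial; only then does one get good reduction of ${\mathcal F}'_0$ at $w$, whence $\kappa_{v,w}$-points by PAC and $\kappa_v$-points by Hensel. Your proposal never confronts vertical ramification at $w$ for the twisted or geometrically-isomorphic objects, which is the one step the hypothesis exists for; note also that if your shortcut (working only with the explicit unramified twist and untwisting at the bottom) were rigorous as stated, it would prove the theorem with no center hypothesis at all --- which should itself have been a warning that either a hypothesis of the statement is being wasted or a step is being glossed over.
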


For $K = k_0((\theta)) (x)$, condition (a) holds if  $k_0$ is a perfect PAC field with cyclic extensions of any degree. For all but finitely many $x_0\in \Pp^1(k_0((\theta)))$, the $(x-x_0)$-adic valuation $v_{x_0}$ is good for $F/K(B)$, {\it i.e.} condition (b) holds. The special fiber  
is a $k_0((\theta))$-cover and condition (c) requires that  the $\theta$-adic valuation on $k_0((\theta))$ be good for it.

\begin{proof}
Fix $g \in G$. The proof follows the same strategy as in \S \ref{ssec:proofs} and uses again \cite[proposition 2.2]{DEGha}, applied here to the $K_v$-cover $f_v=f \otimes_{K}K_v$ and the unramified homomorphism $\varphi:\Gabs_{K_v} \rightarrow \langle g \rangle \subset G$ defined as follows. From assumption (a), there exists a Galois extension of $\kappa_{v,w}$ of group isomorphic to $\langle g\rangle$. This extension lifts to an unramified (w.r.t. $w$) extension of $\kappa_v$ with the same group, which in turn lifts to an unramified (w.r.t. $v$) extension $E_v/K_v$ with the same group $\langle g\rangle$. Let $\varphi:\Gabs_{K_v} \rightarrow \langle g \rangle \subset G$ be an associated representation of $\Gabs_{K_v}$, {\it i.e.}, the fixed field of $\ker(\varphi)$ in $\overline{K_v}$ is $E_v$.

The $K_v$-cover $f_{v}$ satisfies condition (good-red) from \cite[proposition 2.2]{DEGha}; this is guaranteed by assumption (b).

 To check condition ($\kappa$-big-enough) from  \cite[proposition 2.2]{DEGha}, we give ourselves
what is called an {\it $A_v$-model of $(f_v\otimes_{K_v}K_v^\sep, {\mathcal F}_{v,\spe}\otimes_{\kappa_v} \overline {\kappa_v})$} in \cite{DEGha}, {\it i.e.}, a finite and flat morphism ${\mathcal F}^\prime: {\mathcal X}^\prime\rightarrow {\mathcal B}_v$ with ${\mathcal X}^\prime$ normal and such that ${\mathcal F}^\prime \otimes_{A_v} K_v$ is a $K_v$-cover that is $K_v^\sep$-isomorphic to $f_v\otimes_{K_v}K_v^\sep$ and the special fiber ${{\mathcal F}^\prime_\spe}: {\mathcal X}^\prime_\spe \rightarrow {\mathcal B}_{v,\spe}$ is a $\kappa_v$-cover that is $\overline{ \kappa_v}$-isomorphic to ${\mathcal F}_{v,\spe} \otimes_{\kappa_v} \overline{\kappa_v}$. And we have to  find $\kappa_v$-rational points on ${\mathcal X}^\prime_\spe$ not lying above any point in ${\mathcal D}_\spe\otimes_{\kappa_v} \overline{\kappa_v}$. 

Denote the valuation ring of $w$ by $A_{v,w}$. From assumption (c), the $\kappa_v$-variety ${\mathcal B}_{v,\spe}$ has an integral smooth projective model $\widetilde{\mathcal B}_0$ over $A_{v,w}$, and $w$ is good for this model of $\kappa_v({\mathcal X}_{v,0})/\kappa_v({\mathcal B}_{v,0})$.
It follows that $w$ is also good for $\kappa_v({\mathcal X}^\prime_\spe)/\kappa_v({\mathcal B}_{v,0})$. Indeed conditions (a), (b), (c) from definition \ref{def:good-place} are equivalently satisfied by
the place $w$ for either one of the two extensions. As to condition (d), we resort to a result of S.~Beckmann \cite{Beckmann} that says that non-vertical ramification is automatic under (a), (b), (c) 
if $G$ is of trivial center. It follows that $\widetilde{\mathcal F}^\prime_\spe$ has good reduction 
(at $w$). As $\kappa_{v,w}$ is PAC, there exist $\kappa_{v,w}$-rational points on the reduction (at $w$) of $\widetilde{\mathcal X}^\prime_\spe$ that are not in the branch locus of the reduction (at $w$) of $\widetilde{\mathcal F}^\prime_\spe$. Using Hensel's lemma, these points can be lifted to $\kappa_v$-rational points on ${\mathcal X}^\prime_\spe$ as desired.

Proposition 2.1 from \cite{DEGha} can then be applied to conclude that the unramified extension $E_v/K_v$, cyclic of group $\langle g\rangle$, is a $K_v$-specia\-li\-za\-tion of the extension $F/K(B)$. 
\end{proof} 

\begin{remark}
A non-strict variant of theorem \ref{cor:further-example} can be proved if the residue field $\kappa_{v,w}$ is assumed to be {finite} instead of PAC. 
The modifications are similar to those in the proof of  theorem \ref{thm:tchebotarev} (for (b)\hskip 0,5mm {\it vs.}\hskip 0,5mm (a)): the Lang-Weil estimates replace the PAC property, a finite extension of $K_v$ is needed to insure that the finite residue field $\kappa_{v,w}$ is big enough, etc.  We leave the reader adjust the proof.
\end{remark}

\section{Tchebotarev versus Hilbert}  \label{ssec:tchebotarev-hilbert}
 We compare the Tchebotarev existence property and the Hilbert specialization property. For short we say that a field $K$ given with a localization set ${\mathcal M}$ is {\it Tchebotarev} (resp. {\it strict Tchebotarev}) if every finite regular Galois extension $F/K(T)$ has the Tchebotarev (resp. {\it strict Tchebotarev}) existence property.



From \S \ref{sec:tchebotarev}, PAC fields and number fields are strict Tchebotarev, finite fields are Tchebotarev, but not strict Tchebotarev.

Recall that a finite extension $F/K(T)$ is said to have the {\it Hilbert specialization property} if it has infinitely many specializations $F_{t_0}/K$ at points $t_0\in \Pp^1(K)$ of degree equal to $[F:K(T)]$ and that a field
$K$ is called {\it hilbertian} if the Hilbert specialization property holds for every 
finite extension $F/K(T)$ and {\it RG-hilbertian} if it holds for every finite regular Galois extension $F/K(T)$.

\subsection{The PAC situation {\rm gives a first idea of these notions hierarchy}}
Recall the following definition that is used in statement (a) below: a field $K$ is $\omega$-free if every embedding problem for $\Gabs_K$ is solvable \cite[\S 27.1]{FrJa}. From a theorem of Iwasawa, if $\Gabs_K$ is of at most countable rank, $K$ is $\omega$-free if and only if  $\Gabs_K$ is isomorphic to the free profinite group $\hat F_\omega$ with countably many generators \cite[theorem 24.8.1]{FrJa}.

Conclusions (a) and (b) below are classical; see \cite[corollary 27.3.3]{FrJa} for the {\it if} part in (a), 
\cite[theorem A]{FV92} for the {\it only if} part, and \cite[theorem B]{FV92} for (b)\footnote{ \cite{FV92} assumes $K$ of characteristic $0$ and countable, but these hypotheses have been removed in subsequent works; see \cite{PopLarge} for (a) and  \cite[\S 3.3]{DeBB} for (b).}. We have included them in the statement to put the new conclusions (c) and (d) in perspective.

\begin{proposition} \label{prop:compare-for-PAC}
 Let $K$ be a PAC field given with the trivial localization set ${\mathcal M}=\{0\}$.
\vskip 1mm

\noindent
{\rm (a)} $K$ is hilbertian iff $K$ is $\omega$-free.
\vskip 1mm

\noindent
{\rm (b)} $K$ is RG-hilbertian iff every finite group is a quotient of $\Gabs_K$.
\vskip 1mm

\noindent
{\rm (c)} $K$ is strict Tchebotarev iff every cyclic group is a quotient of $\Gabs_K$.
\vskip 1mm

\noindent
{\rm (d)} $K$ is Tchebotarev iff every cyclic group $C$ is a quotient of some open subgroup $U_C$ of $\Gabs_K$.
\vskip 1mm

\noindent
In particular we have this chain of implications:
\vskip 1mm

\noindent
\centerline{{\rm hilbertian} $\Rightarrow$ {\rm RG-hilbertian} $\Rightarrow$ {\rm strict Tchebotarev} $\Rightarrow$ {\rm Tchebotarev} }
\vskip 1mm

\noindent
Furthermore none of the reverse implications holds.
\end{proposition}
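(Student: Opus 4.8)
\emph{Parts (a) and (b)} are the cited classical results, so the plan is to concentrate on (c), (d), the chain of implications, and the non-reversibility. The unifying idea is to translate each of the four field-theoretic properties into a purely group-theoretic condition on $\Gabs_K$ and then argue group-theoretically, using throughout that a finite extension of a PAC field is again PAC, and that by \cite[corollary 23.1.3]{FrJa} every projective profinite group occurs as $\Gabs_K$ for some PAC field $K$.

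For (c), the \emph{if} direction is exactly theorem \ref{thm:tchebotarev}(a): the hypothesis ``every cyclic group is a quotient of $\Gabs_K$'' is the condition ``$K$ has cyclic extensions of any degree'', and the proof of that theorem yields the \emph{strict} property. For the \emph{only if} direction I would fix a cyclic group $C=\Zz/n\Zz$, choose a regular Galois extension $F/K(T)$ whose group $G$ contains an element $g$ of order $n$ (for instance a regular realization of $S_n$, classically available over any field, with $g$ an $n$-cycle), and feed $g$ into the strict Tchebotarev property: it produces $t_0\in\Pp^1(K)$ with $\Gal(F_{t_0}/K)\cong\langle g\rangle\cong C$, so $C$ is a quotient of $\Gabs_K$. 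The \emph{only if} direction of (d) is identical, except that the non-strict property only delivers a finite extension $k_v/K$ over which $C$ is realized, i.e.\ $C$ is a quotient of the open subgroup $\Gabs_{k_v}$. The \emph{if} direction of (d) is remark \ref{rem:nonstrict_PAC}: an open subgroup $U_C$ equals $\Gabs_{k_C}$ for a finite (hence PAC) extension $k_C/K$, and $C$ being a quotient of $\Gabs_{k_C}$ means $C=\Gal(E_C/k_C)$ for some $E_C$, which is precisely the hypothesis of that remark.

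With (a)--(d) in hand the chain of implications is then immediate. Indeed hilbertian $\Rightarrow$ RG-hilbertian follows from the definitions (a regular Galois extension is in particular an extension); RG-hilbertian $\Rightarrow$ strict Tchebotarev reads ``every finite group is a quotient of $\Gabs_K$'' $\Rightarrow$ ``every cyclic group is a quotient of $\Gabs_K$''; and strict Tchebotarev $\Rightarrow$ Tchebotarev reads ``quotient of $\Gabs_K$'' $\Rightarrow$ ``quotient of the open subgroup $U_C=\Gabs_K$''.

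For non-reversibility I would exhibit projective groups separating the three group-theoretic conditions and realize them by PAC fields via \cite[corollary 23.1.3]{FrJa}. The separation \emph{strict Tchebotarev but not RG-hilbertian} is clean: take $\Gabs_K\cong\widehat{\Zz}$ (e.g.\ a non-principal ultraproduct of finite fields), which has every cyclic group but not $S_3$ as a quotient. For \emph{RG-hilbertian but not hilbertian} I would invoke the known fact that there exist projective profinite groups having every finite group as a quotient yet failing to be $\omega$-free, ``having every finite quotient'' being strictly weaker than solvability of all finite embedding problems. The main obstacle is \emph{Tchebotarev but not strict Tchebotarev}, where I must produce a projective ${\mathcal G}$ whose missed cyclic quotients reappear as quotients of open subgroups. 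My candidate is ${\mathcal G}=\widetilde{S_3}\amalg\widehat{\Zz}^{(3')}$, the free profinite product of the universal Frattini cover of $S_3$ with $\widehat{\Zz}^{(3')}=\prod_{r\neq 3}\Zz_r$. Since any index-$3$ subgroup of $\widetilde{S_3}$ is maximal and hence contains the kernel of $\widetilde{S_3}\to S_3$, one gets $\Hom(\widetilde{S_3},\Ff_3)=\Hom(S_3,\Ff_3)=0$; neither factor has $\Zz/3\Zz$ as a quotient, so $\Zz/3\Zz$ is not a quotient of ${\mathcal G}$ and (c) fails. The linchpin is the elementary lemma that \emph{a projective group admitting a surjection onto $\Zz/q\Zz$ admits surjections onto every $\Zz/q^k\Zz$}: a weak solution of the embedding problem $\Zz/q^k\Zz\to\Zz/q\Zz$ exists by projectivity and is automatically surjective, the whole group being the only subgroup of $\Zz/q^k\Zz$ that maps onto $\Zz/q\Zz$. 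Applying this to the open subgroup $V\subset{\mathcal G}$ that is the preimage of $A_3\subset S_3$ (it surjects onto $\Zz/3\Zz$ and, containing the free factor $\widehat{\Zz}^{(3')}$, onto every prime-to-$3$ cyclic group), a coprimality argument gives $V\twoheadrightarrow\Zz/n\Zz$ for all $n$; so (d) holds and $K$ is Tchebotarev but not strict Tchebotarev. The points I expect to cost the most effort are checking that this free product is genuinely projective with no unintended cyclic quotients of its open subgroups, and pinning down an explicit non-$\omega$-free projective group with all finite quotients for the hilbertian/RG-hilbertian separation.
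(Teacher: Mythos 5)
Your handling of (a)--(d), the chain of implications, and the first two separations coincides with the paper's proof: the \emph{if} parts of (c)/(d) are theorem \ref{thm:tchebotarev}~(a) and remark \ref{rem:nonstrict_PAC}, the \emph{only if} parts come from specializing a regular realization (the paper specializes a regular realization of the cyclic group itself rather than extracting an $n$-cycle from $S_n$ --- an immaterial difference), the counter-example to ``strict Tchebotarev $\Rightarrow$ RG-hilbertian'' is the same $\widehat\Zz$, and for ``RG-hilbertian $\not\Rightarrow$ hilbertian'' the paper does exactly what you propose, namely cite \cite[\S 2]{FV92}, so the ``known fact'' you invoke is precisely that reference. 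The genuine divergence is the separation ``Tchebotarev but not strict Tchebotarev''. The paper uses an example of Bary-Soroker: ${\mathcal G}$ is the universal Frattini cover of $\prod_{n\geq 5}A_n$; no nontrivial cyclic group is a quotient of ${\mathcal G}$ (via \cite[lemma 22.6.3]{FrJa} and \cite[lemma 25.5.3]{FrJa}), while the cyclic group generated by a product of two $m$-cycles in the quotient $A_{2m}$ is realized over an open subgroup (the fixed field of that element), the subgroup depending on $m$. Your ${\mathcal G}=\widetilde{S_3}\amalg\widehat{\Zz}^{(3')}$ achieves the same separation by different and rather more elementary means: the Frattini argument showing $\Hom(\widetilde{S_3},\Ff_3)=0$, the observation that a quotient of prime order of a free profinite product must be a quotient of one of the factors, and your lemma that a projective group surjecting onto $\Zz/q\Zz$ surjects onto every $\Zz/q^k\Zz$ --- all three are correct as stated. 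A bonus of your construction is that a \emph{single} open subgroup $V$ of index $2$ carries all cyclic quotients, whereas in the paper's example the open subgroup varies with the cyclic group.

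Two points in your construction need to be firmed up, both fixable. First, ``$V$ contains the free factor $\widehat{\Zz}^{(3')}$, hence surjects onto every prime-to-$3$ cyclic group'' does not follow from containment alone; use the canonical retraction $\rho:{\mathcal G}\to\widehat{\Zz}^{(3')}$ (trivial on $\widetilde{S_3}$, identity on $\widehat{\Zz}^{(3')}$): since $\widehat{\Zz}^{(3')}\subset V$ one gets $\rho(V)=\widehat{\Zz}^{(3')}$, and composing with $\widehat{\Zz}^{(3')}\twoheadrightarrow\Zz/m\Zz$ gives the desired surjections. Second, the projectivity of ${\mathcal G}$ must be justified before \cite[corollary 23.1.3]{FrJa} can produce a PAC field with $\Gabs_K\simeq{\mathcal G}$, and before your $\Zz/q\Zz\Rightarrow\Zz/q^k\Zz$ lemma can be applied to $V$ (closed subgroups of projective groups being projective): universal Frattini covers are projective \cite[\S 22.6]{FrJa}, $\widehat{\Zz}^{(3')}$ is projective since its Sylow subgroups are free pro-$p$, and free profinite products of projective groups are projective (a standard fact, e.g.\ by embedding each factor into a free profinite group and using that free products preserve embeddings and that a free product of free profinite groups is free). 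With these two insertions your counter-example is complete and is a legitimate alternative to the one in the paper.
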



\begin{proof}
The {\it if} part in (c) is theorem \ref{thm:tchebotarev} (a).
For the {\it only if} part, let $G$ be a cyclic group. Classically every cyclic group $G$ is the group of some regular Galois extension $F/K(T)$.
If $K$ is strict Tchebotarev, then a specialization $F_{t_0}/K$ of group $G$ does exist. Similar arguments lead to the non-strict variant (d) of (c) (use remark \ref{rem:nonstrict_PAC} for the {\it if} part).

Using the classical result (*) recalled in \S \ref{ssec:more-concrete-ex}, the search of counter-examples to the reverse implications can be reduced to that of projective profinite groups ${\mathcal G}$ with appropriate properties. For a counter-example to ``strict Tchebotarev $\Rightarrow$ RG-hilbertian'', take ${\mathcal G} = \widehat \Zz$ and a PAC field $K$ such that $\Gabs_K \simeq {\mathcal G}$. From statements (b) and (c), $K$ is strict Tchebotarev but is not RG-hilbertian.
For a counter-example to ``RG-hilbertian $\Rightarrow$ hilbertian'', see \cite[\S 2]{FV92}. Finally for the implication  ``Tchebotarev $\Rightarrow$ strict Tchebotarev'', we have the following counter-example, provided to us by Bary-Soroker.

Take for ${\mathcal G}$ the universal Frattini cover \cite[\S 22.6]{FrJa} of the group $\prod_{n\geq 5} A_n$ and a PAC field $K$ such that $\Gabs_K \simeq {\mathcal G}$. From \cite[lemma 22.6.3]{FrJa}, if a cyclic subgroup $C$ is a quotient of ${\mathcal G}$, then $C$ is a Frattini cover of a quotient $D$ of $\prod_{n\geq 5} A_n$. But then from \cite[lemma 25.5.3]{FrJa}, $D$ is a direct product of alternating groups $A_n$: a contradiction if $C$ is non-trivial. Conclude {\it via} statement (d) that $K$ is not strict Tchebotarev. Now as we explain below, $K$ is Tchebotarev. For every integer $m\geq 1$, the alternating group 
$A_{2m}$ is a quotient of ${\mathcal G}$. Denote by $K_{2m}/K$ the corresponding Galois extension, 
of group $A_{2m}$. If $\sigma_m\in A_{2m}$ is the product of two $m$-cycles and $k$ the fixed field of $\sigma_m$ in $K_{2m}$, then $k/K$ is finite and  $K_{2m}/k$ is Galois of group $\langle \sigma_m \rangle$. As $m$ is arbitrary, this indeed shows that every cyclic subgroup is a quotient of some 
open subgroup of $\Gabs_K$ and so {\it via} (d) that $K$ is Tchebotarev.
\end{proof}


\subsection{The general situation {\rm over non PAC fields $K$ and for not necessarily trivial localization sets $\mathcal M$ is more complex} } \label{ssec:general-situation}    We explain in this subsection what remains in general of 
the first four equivalences of proposition \ref {prop:compare-for-PAC} and in the next one how 
some implications between the various properties can still be obtained.
\vskip 0mm

Assume $K$ is an arbitrary field. 

\subsubsection{}
Implication ($\Rightarrow$) in proposition \ref{prop:compare-for-PAC} (a) does not hold: for example, $\Qq$
 is hilbertian but not $\omega$-free.\footnote{However it is conjectured that ``$K$ hilbertian'' implies that every {\it split} finite embedding problem over $K$ has a solution \cite{DeDes1} (which itself implies $K$ $\omega$-free if in addition $\Gabs_K$ is projective and countable).}  Implication ($\Rightarrow$)  in proposition \ref{prop:compare-for-PAC} (c) and (d) still holds: the argument is the same as for PAC fields (and this argument also shows that implication ($\Rightarrow$) in proposition \ref{prop:compare-for-PAC} (b) also holds if every finite 
group is the Galois group of some regular Galois extension; see also \cite[\S 3.3.2]{DeBB}).


\subsubsection{} None of the converses hold in general. For (a), see \cite[remark 2.14]{Bary-Soroker-Paran}. 
For (b) and (c), take a prime $p$ and consider the field $\Qq^{\tp}$ of all totally $p$-adic algebraic numbers. It is known that every finite 
group is a quotient of $\Gabs_{\Qq^{\tp}}$ \cite{efrat}. But if $F/\Qq^{\tp}(T)$ is the extension given by the polynomial $P(T,Y)= Y^2-Y - (pT/T^2-p)$, then for every $t_0\in \Pp^1(\Qq^{\tp})$, the polynomial $P(t_0,Y)$ is split in $\Qq^{\tp}[Y]$ \cite[example 5.2]{DeHa}. Therefore $F/\Qq^{\tp}(T)$ has no $\Qq^{\tp}$-specialization with Galois group $\Zz/2\Zz$ and so $\Qq^{\tp}$ is not strict Tchebotarev. This example also shows that $\Qq_p$ is not strict Tchebotarev and so yields another counter-example to the converse in (c).
One may think that $\Qq_p$ is not even Tchebotarev; it would then also be a counter-example to ($\Leftarrow$) in (d).

\subsection{Tchebotarev versus Hilbert: general case} 
Proposition \ref{prop:strongTch-impliesRGH} below shows that the Hilbert property is squeezed between a strong and a weak variant of the Tchebotarev property.






\begin{definition} \label{def:strongtchebotarev}
If $K$ is given with a localization set ${\mathcal M}$, a finite Galois extension $F/K(B)$ is said to have the {\it strong Tchebotarev existence property with respect to ${\mathcal M}$} if for every element $g\in G$, there exist {\it infinitely many places} $v\in {\mathcal M}$  
with corresponding 
points $t_v\in B(K_v)\setminus D$ $k_v$-unramified for $F/K(B)$ and such that the Frobenius subgroup of $F/K(B)$ at $t_0$ over $K_v$ is cyclic and conjugate to the subgroup $\langle g\rangle\subset G$.  \end{definition}

We also say that  $K$ is {\it strong Tchebotarev} if every finite regular Galois extension $F/K(T)$ has the strong Tchebotarev existence property. 

\begin{proposition} \label{prop:strongTch-impliesRGH}
Let $F/K(T)$ be a finite regular Galois extension.
\vskip 1mm

\noindent
{\rm (a)} If $F/K(T)$ has the strong Tchebotarev existence property w.r.t. a localization set ${\mathcal M}$ of $K$, then it has the Hilbert specialization property. In particular, if $K$ is {\it strong Tchebotarev}, then it is RG-hilbertian.
\vskip 1mm

\noindent
{\rm (b)} If $K$ is a countable hilbertian field then $F/K(T)$ has the Tchebotarev existence property
w.r.t. the trivial localization set ${\mathcal M}=\{0\}$.  In particular, $K$ is {\it Tchebotarev} w.r.t.  ${\mathcal M}=\{0\}$.
\end{proposition}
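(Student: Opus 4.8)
The plan is to handle the two parts separately: part (a) rests on manufacturing a single global point whose local decomposition groups meet every conjugacy class of $G$, closing with Jordan's lemma, while part (b) is a direct fixed-field trick built on RG-hilbertianity.

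For part (a), enumerate $G=\{g_1,\dots,g_r\}$. For each $i$, the strong Tchebotarev property supplies \emph{infinitely many} places carrying a $K_v$-rational, $K_v$-unramified point whose Frobenius subgroup is conjugate to $\langle g_i\rangle$; since infinitely many are available for each index, I can pick \emph{pairwise distinct} places $v_1,\dots,v_r\in\mathcal M$ together with local points $t_{v_i}\in\Pp^1(K_{v_i})\setminus D$ realizing (a conjugate of) $\langle g_i\rangle$ as Frobenius over $K_{v_i}$. By the Artin--Whaples approximation theorem, $K$ is dense in $\prod_{i=1}^{r}K_{v_i}$, so I would choose $t_0\in\Pp^1(K)\setminus D$ that is $v_i$-adically close to $t_{v_i}$ for every $i$. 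As each $t_{v_i}$ avoids the branch locus and is unramified, the cover is \'etale near $t_{v_i}$ and the unramified local specialization is locally constant in the point (Krasner's lemma); hence for $t_0$ close enough the Frobenius of $F/K(T)$ at $t_0$ over $K_{v_i}$ is again a conjugate of $\langle g_i\rangle$.

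Now the local specialization $(FK_{v_i})_{t_0}/K_{v_i}$ is the completion of the global specialization $F_{t_0}/K$ at a place above $v_i$, so its Galois group is a decomposition subgroup of $\Gal(F_{t_0}/K)\subseteq G$. Thus $\Gal(F_{t_0}/K)$ contains a conjugate of $\langle g_i\rangle$, hence a conjugate of $g_i$, for every $i$; that is, $\Gal(F_{t_0}/K)$ meets every conjugacy class of $G$. Since a finite group is never the union of the conjugates of a proper subgroup (Jordan), a subgroup meeting every conjugacy class is all of $G$, so $\Gal(F_{t_0}/K)=G$ and $[F_{t_0}:K]=|G|=[F:K(T)]$. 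The admissible $t_0$ form a non-empty open condition at the finitely many places $v_i$, which therefore contains infinitely many $K$-points; this yields the Hilbert specialization property, and the ``in particular'' for strong Tchebotarev fields follows at once. For part (b), since $K$ is hilbertian it is \emph{a fortiori} RG-hilbertian, so there are infinitely many $t_1\in\Pp^1(K)\setminus D$ with $\Gal(F_{t_1}/K)=G$; fix one and set $M=F_{t_1}$, a $G$-Galois extension of $K$. Given $g\in G$, put $k=M^{\langle g\rangle}$, a finite extension of $K$ of degree $[G:\langle g\rangle]$ with $\Gal(M/k)=\langle g\rangle$ cyclic. The $k$-specialization of $F/K(T)$ at $t_1$ is $F_{t_1}\cdot k=M\cdot k=M$, so the Frobenius subgroup at $t_1$ over $k$ is $\Gal(M/k)=\langle g\rangle$; as the localization set is trivial the unramifiedness requirement is vacuous and $k$ is a legitimate $\mathcal M$-local field, so letting $g$ range over $G$ gives the Tchebotarev existence property (I note the countability hypothesis is not actually needed for this deduction).

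The main obstacle is the passage from \emph{cyclic} local data to the \emph{full} group $G$ in part (a). The decomposition groups are defined only up to conjugacy and one has no control over which conjugates occur, so one cannot directly demand that they generate $G$; the resolution is to realize a conjugate of \emph{every} cyclic subgroup and to replace ``generate'' by ``meet every conjugacy class'', which Jordan's lemma then upgrades to all of $G$. The supporting analytic points---weak approximation on $\Pp^1$ and the local constancy of unramified specializations---are routine but must be invoked carefully so that the prescribed Frobenii survive the approximation.
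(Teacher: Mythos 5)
Your proposal is correct. For part (a) you follow essentially the paper's own route: pairwise distinct places $(v_i)$ realizing each cyclic subgroup $\langle g_i\rangle$ as a Frobenius, Artin--Whaples approximation to produce a single $t_0\in \Pp^1(K)\setminus D$, the observation that each $\Gal(F_{t_0}K_{v_i}/K_{v_i})$ is a subgroup of $\Gal(F_{t_0}/K)$, and Jordan's lemma to conclude $\Gal(F_{t_0}/K)=G$. The only difference is how the prescribed Frobenii are shown to survive approximation: you invoke Krasner's lemma (local constancy of unramified specializations), whereas the paper derives from the twisting lemma of \S \ref{ssec:proofs} that the set of suitable local points is a $v$-adically open subset of $\Pp^1(K_v)\setminus D$; these are two justifications of the same openness statement, and both are sound.

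Part (b), however, is genuinely different from the paper's proof (due to Bary-Soroker) and is in fact more economical. The paper embeds the \emph{countable} hilbertian field $K$ into a field $E$, Galois over $K$, which is PAC and $\omega$-free (\cite[theorem 18.10.2]{FrJa} -- this is the only place countability is used), deduces from proposition \ref{prop:compare-for-PAC} that $E$ is hilbertian and hence strict Tchebotarev, and then descends from $E$ to a finite extension $k$ of $K$ by a ``standard argument''. You instead use hilbertianity of $K$ directly: take a Hilbert specialization $M=F_{t_1}$ with $\Gal(M/K)=G$, and for each $g\in G$ let $k=M^{\langle g\rangle}$; since specialization commutes with the constant base change $k/K$ at the unbranched point $t_1$, one gets $(Fk)_{t_1}=Mk=M$ with $\Gal(M/k)=\langle g\rangle$, and with the trivial valuation the unramifiedness requirement is vacuous, so $t_1$ over $k$ witnesses the Tchebotarev property for $g$. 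This avoids the embedding theorem and the PAC machinery altogether and, as you correctly note, removes the countability hypothesis, which the paper's argument needs only to quote \cite[theorem 18.10.2]{FrJa}. One point you should make explicit: the identification $\Gal(M/K)\simeq G$ must be the reduction isomorphism coming from the decomposition group at $t_1$ (it is canonical only up to conjugation in $G$); with that choice, the Frobenius subgroup of $F/K(T)$ at $t_1$ over $k$ is literally $\langle g\rangle$, which is what definition \ref{def:tchebotarev} requires. With this precision added, your argument is complete.
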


\begin{proof} (a) Definition \ref{def:strongtchebotarev} makes it possible to construct a family of places $(v_g)_{g\in G}$, pairwise distinct and with the property that for each $g\in G$, there exists $t_{v_g}\in \Pp^1(K_{v_g})\setminus D$ $k_v$-unramified for $F/K(T)$ and such $\Gal(F_{t_{v_g}}K_{v_g}/K_{v_g})$ is conjugate to $ \langle g\rangle$. For each $g\in G$, the set of such points $t_{v_g}$ is a $v_g$-adic subset of  $\Pp^1(K_{v_g})\setminus D$; this follows from the twisting lemma recalled in \S \ref{ssec:proofs} (*). Using the approximation Artin-Whaples theorem, the collection of points $(t_{v_g})_{g\in G}$ can be approximated by some point $t_0\in \Pp^1(K)\setminus D$ such that  $\Gal(F_{t_0}K_{v_g}/K_{v_g})$ is conjugate to $\langle g\rangle$ for each $g\in G$. As $\Gal(F_{t_0}K_{v_g}/K_{v_g})$ is a subgroup of $\Gal(F_{t_0}/K)$, conclude that $\Gal(F_{t_0}/K)$ meets each conjugacy class of $G$. By a classical lemma of Jordan \cite{Jordan}, $\Gal(F_{t_0}/K)$ is all of $G$.
\vskip 1mm

(b) The following proof is due to L.~Bary-Soroker. From \cite[theorem 18.10.2]{FrJa}, the countable hilbertian field $K$ can be embedded in some field $E$, Galois over $K$, PAC and $\omega$-free. 
From proposition \ref{prop:compare-for-PAC}, $E$ is hilbertian, and consequently is strict 
Tchebotarev w.r.t. ${\mathcal M}=\{0\}$. It readily follows that $F/K(T)$ has the Tchebotarev existence property (and that $K$ is Tchebotarev w.r.t. ${\mathcal M}=\{0\}$). Indeed given any $g\in \Gal(F/K(T)) = \Gal(F E/E(T))$, there exists $t_0\in \Pp^1(E)$ such that $\langle g\rangle = \Gal((F E)_{t_0}/E)$ and a  standard argument shows that the same is true with $E$ replaced by some finite extension $k$ of $K$. \end{proof}

The proof shows that proposition \ref{prop:strongTch-impliesRGH} (a) still holds if $F/K(T)$ is replaced by an extension $F/K(B)$ with $B$ satisfying the {\it weak approximation property} (and even {\it the weak weak approximation property} \cite[d\'efinition 3.5.6]{Serre-topics}).

\section{A question on infinite extensions} \label{sec:counterexamples}

This section is devoted to the question which arose in \S \ref{ssec:refinedquestion}.

Fix a field $K$ with a localization set ${\mathcal M}$, assumed to be standard (the definition and a list of examples are given in \S \ref{ssec:loc-glob-conclusion}). 
%
Fix also a smooth projective and geometrically integral $K$-variety $B$.

\subsection{The question and the main result} \label{ssec:the-question}
Given a Galois extension $F/K(B)$ of group $G$, the following conditions 
were introduced in \S \ref{ssec:refinedquestion}

\vskip 2mm

\noindent 
{\rm (Ub-loc-d)} {\it the ${\mathcal M}$-local degrees of $F/K(B)$ are uniformly bounded.}
\vskip 2mm

\noindent 
 {\it $F\subset K(B)^{(d)}$ for some integer $d\geq 1$.} 
\vskip 2mm

\noindent 
Under the assumption that $F/K(B)$ has the Tchebotarev existence property, we showed that if $G$ is abelian, (Ub-loc-d) implies $F\subset K(B)^{(d)}$ for some $d$ (corollary \ref{cor:abelian}) and that for $G$ arbitrary,  (Ub-loc-d) only implies that the exponent of $G$ is finite (proposition \ref{prop:exposant}). The question remains whether 
 (Ub-loc-d) implies $F\subset K(B)^{(d)}$ for some $d$ in general. We will produce several examples showing 
that it does not.

Our examples will even satisfy this stronger variant of (Ub-loc-d): 
\vskip 2mm

\noindent 
(Ub-dec-d)  {\it the ${\mathcal M}$-local decomposition degrees of $F/K(B)$ are uniformly bounded.}
\vskip 2mm

\noindent
where by {\it local decomposition degree} at some $\mathcal{M}$-local point $t_0\in B(k_v)$, we mean the order of the decomposition group of $Fk_v/k_v(B)$ at $t_0$ (while the local degree is the degree of the residue extension).

More specifically we will prove the following.

\begin{theorem}  \label{thm:3-examples}
In the following situations, there exists an infinite 
Galois extension $F/K(T)$ satisfying {\rm (Ub-dec-d)} but such that $F\subset K(B)^{(d)}$ for any integer $d$:
\vskip 1mm

\noindent
{\rm (a)} The RIGP holds over $K$ and the localization set ${\mathcal M}$ is standard.
Furthermore the constructed extension $F/K(T)$ is regular over $K$.
\vskip 1,3mm

\noindent
{\rm (b)} $K$ is a finite field and $B=\Pp^1$.
\end{theorem}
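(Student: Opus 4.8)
The plan is to construct, in each case, an infinite Galois extension whose Galois group has infinite exponent (so that it cannot be contained in any $K(B)^{(d)}$, by the exponent bound noted in \S\ref{ssec:refinedquestion}) while arranging that every local decomposition group stays bounded. I would build $F$ as a compositum (a tower) $F=\bigcup_n F_n$ of finite regular Galois extensions $F_n/K(T)$ whose groups $G_n$ have exponents tending to infinity, say with $G_n$ containing an element of order $p_n$ for an increasing sequence of primes or prime powers. The point is that (Ub-dec-d) constrains only the \emph{decomposition} groups at $\mathcal{M}$-local points, not the full group $G=\Gal(F/K(T))$; so the freedom is to let $G$ be large globally while forcing the local pieces to be small. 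Since the exponent of $G$ is then infinite, $F\not\subset K(B)^{(d)}$ for any $d$, which is exactly the desired negative answer to the refined question.

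For part (a), since the RIGP holds over $K$, every finite group $G_n$ is realized as the Galois group of some regular extension $F_n/K(T)$; I would choose the $G_n$ with unbounded exponent and take care to assemble them into a genuine tower $F_1\subset F_2\subset\cdots$, for instance by realizing an increasing sequence of groups with prescribed quotients, or by taking suitable fibre products of the individual realizations over $K(T)$ (keeping regularity, which is preserved under such compositions). The crucial analytic input is that the localization set $\mathcal{M}$ is \emph{standard}: by Definition~\ref{def:standard} the local fields $k_v$ are perfect with absolute Galois groups of uniformly bounded rank, so each decomposition group, being a quotient of $\Gabs_{k_v}$, is topologically generated by a bounded number of elements. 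To turn bounded rank into bounded \emph{order} I would engineer the $G_n$ so that their local images, equivalently the decomposition groups of the tower at each place, are forced to lie in groups of bounded order; the natural mechanism is to arrange that the decomposition group at every $\mathcal{M}$-local point embeds into a group controlled by the bounded rank of $\Gabs_{k_v}$ together with an a priori bound independent of $n$ coming from the specific group-theoretic shape of the $G_n$.

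For part (b), with $K$ a finite field and $B=\Pp^1$, the local fields are finite extensions of $K$, whose absolute Galois groups are all pro-cyclic ($\widehat{\Zz}$). Here every decomposition group at an $\mathcal{M}$-local point is cyclic, and its order equals the relevant local degree; the task reduces to exhibiting a tower of finite regular Galois extensions $F_n/K(T)$ with $\exp(G_n)\to\infty$ yet whose decomposition groups at all points of $\Pp^1$ over all finite residue fields have uniformly bounded order. I would use an explicit construction, in the spirit of the $Y^2-Y=T^p-T$ Artin--Schreier example already mentioned after Addendum~\ref{add:tchebotarev} (which has \emph{trivial} specializations at all $\Ff_p$-points), iterating or stacking Artin--Schreier (or Carlitz-type) extensions so that the global group grows in exponent while, by a Lang-type / good-reduction count, the Frobenius at each rational place sits in a uniformly bounded cyclic subgroup.

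The main obstacle in both parts is the same: decoupling the \emph{global} exponent (which I need to be infinite) from the \emph{local} decomposition degrees (which I need to be uniformly bounded across all places $v\in\mathcal{M}$ and all levels $n$). Proving the exponent is infinite is easy once the $G_n$ are chosen; the delicate part is the \emph{uniform} local bound, and here the standardness of $\mathcal{M}$ (bounded rank of the $\Gabs_{k_v}$, via the Restricted Burnside input used in Proposition~\ref{prop:exposant}) is what must be leveraged, together with an explicit enough construction in case (b). I expect the verification that no two high-level generators can simultaneously ramify or decompose nontrivially at a single place --- i.e. that the decomposition groups genuinely stay small uniformly in $n$ --- to be where the real work lies.
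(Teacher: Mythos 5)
Your overall strategy is self-defeating, and the obstruction is precisely the point of the theorem. You propose to force $F\not\subset K(B)^{(d)}$ by making $\exp(G)$ infinite while keeping decomposition degrees bounded. But (Ub-dec-d) implies (Ub-loc-d) (the residue degree divides the order of the decomposition group), and in the situations of the theorem bounded local degrees \emph{force} $\exp(G)$ to be finite. Concretely, in case (b) you want a tower of finite \emph{regular} Galois extensions $F_n/K(T)$ over a finite field with $\exp(G_n)\to\infty$: by theorem \ref{thm:tchebotarev}\,(b) each $F_n/K(T)$ has the Tchebotarev existence property, hence so does $F/K(T)$, and then statement (**) in the proof of proposition \ref{prop:exposant} says every order of an element of $G$ occurs among (divides) the local degrees of $F/K(T)$; unbounded exponent therefore gives unbounded local degrees, and (Ub-dec-d) fails. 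No such tower exists. The same contradiction defeats case (a) in the stated generality: the class of fields with the RIGP and a standard localization set contains strict Tchebotarev fields (e.g.\ perfect PAC fields with $\Gabs_K\simeq\widehat\Zz$; PAC fields are ample, so the RIGP holds over them), and over those fields proposition \ref{prop:exposant} again rules out infinite exponent together with (Ub-loc-d). Your hoped-for mechanism --- bounded rank of $\Gabs_{k_v}$ forcing bounded order of the decomposition groups --- cannot exist without a bound on the exponent, since $\widehat\Zz$ has cyclic quotients of every order; the Restricted Burnside argument of proposition \ref{prop:exposant} needs bounded rank \emph{and} bounded exponent. Infinite-exponent constructions only survive over fields where the Tchebotarev property fails (algebraically closed fields): that is theorem \ref{thm:2-examples}, a different statement.

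What the proof actually requires is a family of groups of \emph{uniformly bounded exponent} whose compositum nevertheless escapes every $K(B)^{(d)}$; one must exploit the gap between ``$\exp(G)$ finite'' and ``$F\subset K(B)^{(d)}$ for some $d$'', a gap which exists only for nonabelian $G$ (corollary \ref{cor:abelian}). The paper takes $G_m=W_m\rtimes E_m$ with $E_m$ extra-special of order $\ell^{2m+1}$, exponent $\ell$ and rank $2m$, and $W_m$ an irreducible $E_m$-module of dimension $\ell^m$ over $\Ff_q$, where $\ell, q$ are odd primes with $\ell\mid q-1$, both chosen prime to ${\rm char}(K)$. These groups have exponent dividing $\ell q$, so the converse direction of proposition \ref{prop:exposant} (standardness, Restricted Burnside) gives (Ub-loc-d); since the $|G_m|$ are prime to the residue characteristics, all branching is tame, the inertia groups are pro-cyclic of order $\leq\exp(G)$, and (Ub-dec-d) follows (proposition \ref{c_ex_p}); finally the group theory of \cite{ChZa} (propositions 3.1 and 3.3: irreducibility of $W_m$ and rank $2m\to\infty$) shows the compositum lies in no $K(B)^{(d)}$. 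The two hypotheses of the theorem enter only to realize the $G_m$ over $K(T)$: the RIGP in case (a) (giving regular extensions $F_m/K(T)$), and Shafarevich's theorem in case (b), since $G_m$ is solvable of odd order and $K(T)$ is a global field (regularity is neither available nor claimed there).
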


Recall that the RIGP (Regular Inverse Galois Problem) is the condition that every finite group is the Galois group of some regular Galois extension $F/K(T)$.
The RIGP is known to hold over PAC fields and complete valued fields. So such fields with a standard localization set are examples of fields as in (a). Conjecturally the RIGP holds over every field and so all fields $K$ with  a standard localization set, {\it e.g.} number fields, are other examples.


\subsection{Proof of theorem \ref{thm:3-examples}} 
We will adjust to our function field context a construction given in \cite[\S 3]{ChZa} in the context of number fields.

\subsubsection{Strategy} \label{ssec:construction}
The construction uses extra-special groups. We recall their definition and refer to \cite[\S A.20]{Doerk} for more details.

\begin{definition} Given a prime number $\ell$, a finite $\ell$-group  $E$ is said to be $\emph{extra-special}$ if its center $Z(E)$ and its commutator subgroup $E^\prime$ have both order $\ell$ (and then $Z(E)=E^\prime$).
\end{definition}


Fix two odd primes $\ell$ and $q$ such that $\ell\mid q-1$.  Then for every positive integer $m\geq 1$, is known to exist an extra-special group  of order $\ell^{2m+1}$, of exponent $\ell$ and of rank $2m$. Fix one such group $E_m$ ($m\geq 1$). Moreover there exists an irreducible $E_m$-module of dimension $\ell^m$ over the finite field $\mathbb{F}_q$. Fix such an $E_m$-module $W_m$, and finally denote by $G_m$ the semi-direct product $W_m\rtimes E_m$ ($m\geq 1$).

The following statement summarizes the strategy from \cite[\S 3]{ChZa}.

\begin{proposition} \label{c_ex_p} Assume $B$ is a curve
and for each $m\geq 1$, $G_m$ is the group of a Galois extension $F_m/K(B)$. Let $F/K(B)$ be the compositum  of all extensions $F_m/K(B)$. Then $F$ is not contained in $K(B)^{(d)}$ for any $d$ but $F/K(B)$ satisfies the {\rm tame} variant of {\rm (Ub-dec-d)} for which the decomposition degrees are requested to be uniformly bounded at all ${\mathcal M}$-local points $t_0\in B(k_v)$ that are {\rm tamely} 
branched in  $Fk_v/k_v(B)$.
\end{proposition}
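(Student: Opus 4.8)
The plan is to establish the two assertions of Proposition~\ref{c_ex_p} separately, since they are of quite different natures: the first (that $F\not\subset K(B)^{(d)}$) is a purely group-theoretic statement about the $G_m$, while the second (the tame variant of (Ub-dec-d)) is a local ramification-theoretic statement.

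\emph{The non-containment.} First I would show that $F\not\subset K(B)^{(d)}$ for any $d$ by contradiction. If $F\subset K(B)^{(d)}$ for some $d$, then $\Gal(F/K(B))$ would have exponent $\leq d!$, hence bounded exponent; since each $G_m=W_m\rtimes E_m$ is a quotient of $\Gal(F/K(B))$, it would suffice to show that the exponents of the groups $G_m$ are unbounded as $m\to\infty$. The key point is the interplay between $W_m$ (an $\Ff_q$-vector space, so of exponent $q$) and $E_m$ (of exponent $\ell$) inside the semidirect product: because $W_m$ is an \emph{irreducible nontrivial} $E_m$-module and $\ell\neq q$, an element of the form $(w,e)$ with $e\in E_m$ of order $\ell$ acting nontrivially on $W_m$ has order divisible by both $\ell$ and $q$ once one computes $(w,e)^\ell$ using the standard semidirect-product power formula $(w,e)^k=(w+e\cdot w+\cdots+e^{k-1}\cdot w,\,e^k)$. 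More directly, one verifies that $\dim_{\Ff_q}W_m=\ell^m$ forces the order of $G_m$, and in particular a suitable element order, to grow with $m$; I would extract from \cite[\S A.20]{Doerk} (or a direct computation) that $\exp(G_m)$ is divisible by an unbounded quantity. This contradicts bounded exponent and gives $F\not\subset K(B)^{(d)}$.

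\emph{The tame decomposition bound.} For the second assertion, fix an $\mathcal{M}$-local point $t_0\in B(k_v)$ that is tamely branched in $Fk_v/k_v(B)$. I would bound the order of the decomposition group $D_{t_0}\subset\Gal(Fk_v/k_v(B))$ uniformly in $v$ and $t_0$. Since $t_0$ is tamely ramified, its inertia group $I_{t_0}$ is cyclic; the decomposition group sits in an extension $1\to I_{t_0}\to D_{t_0}\to D_{t_0}/I_{t_0}\to 1$ where the quotient, being a subgroup of the Galois group of the residue extension, is pro-cyclic because $\mathcal{M}$ is standard (Definition~\ref{def:standard}) and the residue fields have absolute Galois groups of uniformly bounded rank. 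Thus $D_{t_0}$ is metacyclic, generated by at most two elements, with one generator (the inertia generator) of order bounded by the tame ramification indices and the other (a Frobenius lift) of order bounded by the rank constant of $\Gabs_{\kappa_v}$. The crucial structural input is that each projection of $D_{t_0}$ to $G_m$ is a metacyclic subgroup of $G_m$, and I would invoke the bounded-generation plus bounded-exponent-on-inertia phenomenon to conclude that these projected decomposition groups have order bounded independently of $m$ (and hence of the finite level), exactly as in the Checcoli--Zannier argument \cite[\S 3]{ChZa}.

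\emph{The main obstacle.} The hard part will be the uniform bound on the tame decomposition degrees across \emph{all} the levels $m$ simultaneously, i.e.\ controlling the metacyclic subgroups of the infinite group $\varprojlim$-quotient uniformly. One must show that although the groups $G_m$ themselves grow without bound, their \emph{tamely-ramified decomposition subgroups} do not, which hinges on the fact that a two-generated subgroup of $G_m=W_m\rtimes E_m$ whose inertia part is tame-cyclic cannot capture a large piece of the irreducible module $W_m$; the irreducibility of $W_m$ is precisely what prevents a small (bounded-rank) subgroup from having unbounded order. I would therefore expect the technical heart of the proof to be a lemma, adapted from \cite[\S 3]{ChZa}, bounding the order of any metacyclic (more generally, two-generated with cyclic normal part) subgroup of $G_m$ by a constant independent of $m$, which combined with the standard-localization hypothesis yields (Ub-dec-d) in its tame form.
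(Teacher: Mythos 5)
Your proof of the first assertion rests on a false group-theoretic claim: that the exponents of the groups $G_m=W_m\rtimes E_m$ are unbounded as $m\to\infty$. They are not. By construction $E_m$ has exponent $\ell$ and $W_m$ is an $\Ff_q$-vector space, so your own power formula gives $(w,e)^{\ell}=\bigl((1+e+\cdots+e^{\ell-1})w,\,1\bigr)\in W_m$ and hence $(w,e)^{\ell q}=1$ for every $(w,e)\in G_m$; thus $\exp(G_m)$ divides $\ell q$ for \emph{all} $m$ (an element $(w,e)$ can have order divisible by both $\ell$ and $q$, as you say, but that order is then exactly $\ell q$, not growing with $m$). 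This bounded exponent is not an accident but the entire point of the Checcoli--Zannier construction: the groups are designed so that $G=\Gal(F/K(B))$ has \emph{finite} exponent and yet $F\not\subset K(B)^{(d)}$ for any $d$, which is what makes the example a counter-example. Consequently the non-containment cannot be deduced from exponent considerations at all; the paper obtains it by citing propositions 3.1 and 3.3 of \cite{ChZa}, whose argument is of a different nature (roughly: $\Gal(K(B)^{(d)}/K(B))$ embeds into a product of groups of bounded order, and the irreducible $E_m$-module $W_m$ of dimension $\ell^m$ prevents the groups $G_m$ from all arising as subquotients of such a product).

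Your second part is also flawed, and indeed incompatible with your first: had $\exp(G_m)$ been unbounded, you could not have bounded the cyclic tame inertia groups. Concretely, $D_{t_0}/I_{t_0}$ is the Galois group of the residue (specialization) extension $(Fk_v)_{t_0}/k_v$, a quotient of $\Gabs_{k_v}$; the hypothesis that ${\mathcal M}$ is standard only says that $\Gabs_{k_v}$ has uniformly bounded \emph{rank}, not that it or its quotients are pro-cyclic, and a rank bound certainly does not bound the \emph{order} of a Frobenius lift, as you assert. The paper's argument is shorter and goes the other way around: since $G$ has finite exponent (\cite{ChZa}, prop.\ 3.3), the converse implication (exp-f) $\Rightarrow$ (Ub-loc-d) of proposition \ref{prop:exposant} -- which is exactly where the standard hypothesis enters, via Zelmanov's solution of the Restricted Burnside Problem (bounded rank plus bounded exponent implies bounded order) -- yields uniformly bounded residue degrees; at a tamely branched point the inertia group is pro-cyclic, hence of order at most $\exp(G)<\infty$, and the decomposition degree, being the product of these two quantities, is uniformly bounded. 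Your proposed ``key lemma'' bounding two-generated subgroups of $G_m$ uniformly in $m$ is actually provable for these particular groups (the image in $E_m$ has order at most $\ell^3$, and a Reidemeister--Schreier count bounds the intersection with $W_m$), but in your sketch it is neither proved nor correctly motivated, since its premise (that $D_{t_0}$ is metacyclic) rests on the erroneous pro-cyclicity claim, and it becomes unnecessary once the finite-exponent/Burnside argument is in place.
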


\begin{proof} The proof is given in \cite{ChZa} in the case $\dim(B)=0$ and can be used in the more general case $\dim(B) \geq 0$ with almost no changes. Proposition 3.1 and proposition 3.3 of \cite{ChZa} show that 
$F$  is not contained in $K(B)^{(d)}$ for any integer $d\geq 1$ and that $G=\Gal(F/K(B))$ is of finite exponent. From proposition \ref{prop:exposant}, this implies that the local degrees of $F/K(B)$ are uniformly bounded. For each $t_0\in B(k_v)$ the local degree of $F/K(B)$ at $t_0$ is the degree of the residue field extension above 
the  point $t_0$. Thus it remains to prove that the inertia subgroups at all ${\mathcal M}$-local points $t_0\in B(k_v)$ that are tamely branched in the extension $Fk_v/k_v(B)$ are of uniformly bounded orders. By definition of ``tame branching'', these inertia subgroups are pro-cyclic subgroups of $G$, and so are of order $\leq \exp(G)$.
\end{proof}


\subsubsection{End of proof of theorem \ref{thm:3-examples}}\label{ssec:finite}
We use the construction from \S \ref{ssec:construction} with the primes $\ell, q$ distinct from $p$. Under the hypotheses of  theorem \ref{thm:3-examples}, for each $m\geq 1$, we have a Galois extension $F_m/K(T)$ of group $G_m$. This is clear in case (a) of  theorem \ref{thm:3-examples}; the extension $F_m/K(T)$ can further be taken to be regular over $K$. In case (b) for which $K$ is finite, we resort to Shafarevich's theorem \cite{Neuk}: the group $G_m$ is solvable, having odd order, and therefore it is the Galois group of some extension $F_m$ of the global field $K(T)$. 
Note next that the groups $G_m$ are of prime-to-$p$ order. In particular branching is automatically tame and 
so the original and the tame versions of  {\rm (Ub-dec-d)} are equivalent. Proposition \ref{c_ex_p} concludes the proof. $\square$

\subsection{Bounding the branch point set} 
\label{ssec:alg-clos-car-positive}

\subsubsection{A second question} \label{ssec:refined-question}
Here we show that (Ub-dec-d) does not imply that $F\subset K(B)^{(d)}$ for some $d$ even if we assume further that the branch point set is finite. However the base field will
be algebraically closed in our counter-examples (and so the Tchebotarev property 
will not hold).

\begin{theorem}  \label{thm:2-examples}
In situation {\rm (a)} or {\rm (b)} below, there is an infinite Galois extension $F/K(B)$ 
satisfying {\rm (Ub-dec-d)} but  such that $F\not\subset K(B)^{(d)}$ for any $d$ and that is branched at only finitely many points:
\vskip 1mm

\noindent
{\rm (a)} $K$ is an algebraically closed field of characteristic $p>0$ and $B$ is a curve of genus
$\geq 1$.
\vskip 1mm

\noindent
{\rm (b)} $K$ is an algebraically closed field of characteristic $0$ and $B=\Pp^1$.
\end{theorem}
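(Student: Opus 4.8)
The plan is to treat the two cases by different geometric constructions, both exploiting that $K$ is algebraically closed. Over such a base the residue extension at every point of $B$ is trivial, so the local decomposition group coincides with the inertia group; hence {\rm (Ub-dec-d)} reduces to bounding the orders of all inertia groups, while all local degrees are automatically $1$. It therefore suffices, in each case, to exhibit an infinite Galois extension $F/K(B)$, branched at finitely many points, whose inertia groups have bounded order and whose group $\Gal(F/K(B))$ has \emph{infinite exponent}. I would first record that infinite exponent is exactly what gives $F\not\subset K(B)^{(d)}$: if $F\subset K(B)^{(d)}$ then $\Gal(F/K(B))$ is a quotient of $\Gal(K(B)^{(d)}/K(B))$, a group of finite exponent (cf. \S\ref{ssec:refinedquestion}), so finite exponent is necessary.

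For (a) I would use \emph{unramified} (\'etale) extensions, which is possible precisely because $B$ has genus $g\geq 1$. Since $g\geq 1$, the prime-to-$p$ abelianization of $\pi_1^{\et}(B)$ is nontrivial (it is $\cong \widehat{\Zz}^{(p')\,2g}$, arising from the tower of prime-to-$p$ isogenies of $\Jac(B)$), so there is a surjection $\pi_1^{\et}(B)\twoheadrightarrow \widehat{\Zz}^{(p')}$. Let $F/K(B)$ be the corresponding infinite unramified abelian extension, with $\Gal(F/K(B))\cong \widehat{\Zz}^{(p')}$. It is branched at no point and every decomposition group is trivial, so {\rm (Ub-dec-d)} holds trivially; and $\widehat{\Zz}^{(p')}$ has infinite exponent, since it surjects onto $\Zz/n\Zz$ for every $n$ prime to $p$. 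Thus $F\not\subset K(B)^{(d)}$ for all $d$, and (a) is settled.

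For (b), $B=\Pp^1$ is simply connected, so branching is unavoidable; the idea is to branch over a \emph{fixed} finite set while keeping the tame inertia bounded, using a hyperbolic triangle group. Fix three points $P_1,P_2,P_3\in\Pp^1(K)$ and an integer $N\geq 4$. By the comparison between \'etale and topological $\pi_1$ in characteristic $0$, $\pi_1^{\et}(\Pp^1\setminus\{P_1,P_2,P_3\})$ is the free profinite group $\widehat F_2$, and the triangle group $\Delta=\langle\gamma_1,\gamma_2,\gamma_3\mid \gamma_i^{N},\ \gamma_1\gamma_2\gamma_3\rangle$ is a quotient of the discrete free group $F_2$. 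Let $F/K(\Pp^1)$ be the infinite Galois extension with group the profinite completion $\widehat\Delta$. By construction $F$ is branched only at $P_1,P_2,P_3$, and the inertia at $P_i$ is the image of $\langle\gamma_i\rangle$, of order dividing $N$; hence {\rm (Ub-dec-d)} holds.

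The hard part will be verifying that $\widehat\Delta$ has infinite exponent. For $N\geq 4$ the group $\Delta$ is a cocompact Fuchsian group, hence finitely generated and linear over a field of characteristic $0$, hence residually finite, and it contains a hyperbolic element $\gamma$ of infinite order. Since Fuchsian groups are subgroup separable (LERF), the cyclic subgroup $\langle\gamma\rangle\cong\Zz$ is closed in the profinite topology, so its closure in $\widehat\Delta$ is isomorphic to $\widehat\Zz$; therefore $\widehat\Delta$ has infinite exponent and $F\not\subset K(\Pp^1)^{(d)}$ for every $d$. (Alternatively, infinite exponent can be read off from the surjections of $\Delta$ onto the finite simple groups $\mathrm{PSL}_2(\Ff_q)$, whose elements attain orders tending to infinity.) Finally I would note that in both cases the trivial residue extensions make all local degrees equal to $1$, so {\rm (Ub-dec-d)} indeed implies {\rm (Ub-loc-d)}; this is consistent with corollary \ref{cor:abelian} only because the Tchebotarev property fails over an algebraically closed base field.
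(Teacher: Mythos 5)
Your proof is correct in both cases, but it follows routes genuinely different from the paper's. In case (a) the paper does not argue via infinite exponent at all: it takes the groups $G_m=W_m\rtimes E_m$ built from extra-special groups in \S\ref{ssec:construction} (with $\ell=p$), checks that $p(G_m)=G_m$, and invokes the Raynaud--Harbater theorem (Abhyankar's conjecture) to realize each $G_m$ as the group of an everywhere unramified extension $F_m/K(B)$ when the genus is $\geq 1$; the compositum then has Galois group of \emph{finite} exponent, and the non-containment $F\not\subset K(B)^{(d)}$ rests on the Checcoli--Zannier group theory behind proposition \ref{c_ex_p}. Your Jacobian construction, with group $\prod_{\ell\neq p}\Zz_\ell$, is considerably simpler (no Abhyankar, no extra-special groups) and does prove the stated theorem; what it buys less is that the counter-example is ``cheap'': non-containment holds only because the exponent is infinite, whereas the paper's example shows that {\rm (Ub-dec-d)} fails to imply $F\subset K(B)^{(d)}$ even among extensions of finite exponent, i.e., exactly in the regime that proposition \ref{prop:exposant} cannot exclude. (On the other hand, your example being abelian and \'etale is a nice complement: it shows that the genus hypothesis is what blocks the final proposition of \S\ref{sec:counterexamples}, and that corollary \ref{cor:abelian} really needs the Tchebotarev hypothesis.) In case (b) your route is logically the same as the paper's --- bounded inertia plus infinite exponent --- but the construction differs: the paper produces a pro-dihedral extension of group $\Zz_p\rtimes\Zz/2\Zz$ with four branch points and inertia of order $2$, assembling a compatible tower via the Riemann existence theorem and a compactness argument, while you obtain the extension in one stroke as the fixed field of $\ker(\widehat{F}_2\twoheadrightarrow\widehat{\Delta})$, with three branch points and inertia of order $N$. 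Two remarks here: residual finiteness of $\Delta$ (Malcev, $\Delta$ being finitely generated and linear) already forces the image of a hyperbolic element to have infinite order in $\widehat{\Delta}$, so the LERF/subgroup-separability input is stronger than you need; and the paper's dihedral choice is not arbitrary --- by Riemann--Hurwitz its curves $C_m$ all have genus $1$, which is precisely what is re-used for the geometric Bogomolov discussion in \S\ref{sec:bogomolov}, whereas in your triangle-group tower the genus (and plausibly the gonality) grows linearly with the degree, so it could not serve that later purpose.
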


\subsubsection{Proof of case (a): fields of positive characteristic}
Assume that $K$ is an alge\-bra\-ical\-ly closed field of characteristic $p>0$ and $B$ is a curve of genus $g$. We use again the construction from \S \ref{ssec:construction}; we retain the notation from there. From proposition \ref{c_ex_p}, 
we are left with realizing all groups $G_m$ as groups of Galois extensions
$F_m/K(B)$ ($m\geq 1$) with controlled branching. 
We will 
use Abhyankar\rq{}s Conjecture on Galois groups of function field extensions of characteristic $p$, which was proved by the work of M.~Raynaud \cite{raynaud} and D.~Harbater \cite{harbater}:
\vskip 2mm

\noindent
{(The Raynaud-Harbater theorem)} {\it A finite group $G$ can be realized as the group of a Galois extension $F/K(B)$ unbranched outside a finite set $S$ if and only if the minimal number of generators of the quotient $G/p(G)$ of $G$ by the subgroup of $G$ generated by all $p$-Sylow subgroups of $G$ is at most $|S|+2g-1$.}

%
%
\vskip 2mm

Take $\ell = p$.  
For each $m\geq 1$, we have the following. The group $p(G_m)=\ell(G_m)$ is a normal subgroup of $G_m$ which properly contains the $p$-group $E_m$ (since $E_m$ is not normal in $G_m$). Consequently the group $p(G_m)\cap W_m$ is a non trivial normal subgroup 
of $G_m$. 
But as part of the theory of extraspecial groups, $W_m$ is a minimal non trivial normal subgroup of $G_m$. Therefore $W_m\subset p(G_m)$ so finally $p(G_m)=G_m$. 
From the Raynaud-Harbater theorem, if $g\geq 1$, then $G_m$ is the group of some Galois extension $F_m/K(B)$ 
unbranched everywhere. $\square$

%

\begin{remark} \label{rem:l=p-and-g2}
(a) For $g=0$, the construction leads to an extension $F/K(T)$ that is only branched at one point, say the point $\infty$. There is necessarily wild branching and proposition \ref{c_ex_p} guarantees that the decomposition degrees at all $t_0\in \Pp^1(K)\setminus\{\infty\}$ are uniformly bounded.
\vskip 1mm

\noindent
(b) We took $\ell = p$. If $\ell \not= p$, then if $q\not=p$, $p(G_m)$ is trivial and $G_m/p(G_m)=G_m$  and,  if $q=p$, $p(G_m)=q(G_m)=W_m$. So $G_m/p(G_m)$ cannot be generated by less than $2m$ generators ($E_m$ is of rank $2m$) and $G_m$ cannot be realized with branch points in a fixed finite set $S$.
\end{remark}

\subsubsection{Proof of case (b): fields of characteristic $0$} \label{ssec:K=Qbar}

Assume that $K$ is an algebraically closed field of characteristic $0$ and $B=\Pp^1$.

Fix an odd prime $p$. For each $m\geq 1$, take for $G_m$ the dihedral group $\mathbb{Z}/p^m\mathbb{Z}\rtimes \mathbb{Z}/2\mathbb{Z}$ of order $2p^m$. The projective limit $G=\limproj_{m\geq 1} G_m$ is the pro-dihedral group $\Zz_p \rtimes \Zz/2\Zz$. Denote by $C_m$  (resp. $C$) the conjugacy class of $G_m$ (resp. of $G$) of all elements $(x,1)$ with $x\in \Zz/p^m\Zz$ (resp. with $x\in \Zz_p$). These are conjugacy classes of elements of order $2$.

Pick two elements  $\sigma, \tau\in C$ and denote by $\sigma_m$ and $\tau_m$ their images in $C_m$ {\it via} the projection map $G\rightarrow G_m$. We have $\sigma_m \sigma_m \tau_m \tau_m=1$ and $G_m=\langle \sigma_m,\tau_m\rangle$ ($m\geq 1$). By the Riemann existence theorem, if we choose four distinct points $t_1, t_2, t_3, t_4\in \mathbb{P}^1(K)$, there is a Galois extension $F_m/K(T)$, with group $G_m$, branch points $t_1, t_2, t_3, t_4$ and corresponding inertia groups $\langle \sigma_n\rangle$ and its conjugates for $t_1,t_2$ and $\langle\tau_n\rangle$ and its conjugates for 
$t_3,t_4$ ($m\geq 1$). Furthermore, by a classical compactness argument based on the fact that for each $m\geq 1$ and each $4$-tuple $(t_1, t_2, t_3, t_4)$ as above, there are only finitely many choices of the extension $F_m/K(T)$, one can perform  the construction compatibly, {\it i.e.}, so that $F_m/K(T)$ is  obtained from $F_{m+1}/K(T)$ {\it via} the epimorphism $G_{m+1}\rightarrow G_m$ ($m\geq 1$).

Set $F=\limind_{m\geq 1} F_m$. The extension $F/K(T)$ is Galois of group $G$. 
%
%
%
%
%
For each $m\geq 1$, the exponent of $G_m$ is $\geq p^m$ and so $G$ is not of
finite exponent. As already noticed (\S  \ref{ssec:refinedquestion}), this implies that $F$ cannot be a subfield of $K(B)^{(d)}$ for any $d$.
As $K$ is algebraically closed,  for each $t_0\in \Pp^1(k_v)$, the {local decomposition degree} at  $t_0$ is the branching index. By construction, it is $1$ or $2$. So condition {\rm (Ub-dec-d)} holds. $\square$

\subsection{Three final remarks}

The following three remarks relate to case (b) of theorem \ref{thm:2-examples}. 
As in this statement assume that $K$ is an algebraically closed field of characteristic $0$.

\subsubsection{On the geometric Bogomolov property} \label{sec:bogomolov} 
Consider the (smooth pro\-jective) curves $C_m$ corresponding to the function fields $F_m$ from the proof above ($m\geq 1$). The degrees $[F_m:K(T)]$ go to infinity and the Riemann-Hurwitz formula shows that the curves $C_m$ are all of genus $1$. 

We explain below that this provides a counter-example to a geometric analog of a result of Bombieri and Zannier around the Bogomolov property. The ``geometric Bogomolov property'' as presented below is stated by J. Ellenberg in \cite{Eblog}.

Recall that the \emph{gonality} of some $K$-curve $C$ is the least degree of a non constant function $x\in K(C)$ and that  the gonality of a curve is bounded above in terms of its genus. Consequently in our example above, we have that there is no real constant $c>0$ such that
\vskip 2mm

\noindent
{\rm (GB)}  {\it the gonality of $C_m$ is $\geq c\hskip 2pt [F_m:K(T)]$ ($m\geq 1$).}

\vskip 2mm

Condition (GB) can be rewritten in terms of the absolute logarithmic height on $\overline{K(T)}$. Given a non constant function $x\in \overline{K(T)}$, the absolute logarithmic height of $x$, denoted by $h(x)$, is defined as follows:  if $L/K(T)$ is any finite extension such that $x\in L$, $h(x)$ is the ratio $[L:K(x)]/[L:K(T)]$. Noting that if $C$ is a curve corresponding to the function field $L$, then $[L:K(x)]$ is the degree of $x$ on $C$ (equivalently, the number of zeroes (or poles) on $C$), condition (GB) rewrites:
\vskip 2mm

\noindent
{\rm (GB)}  {\it for every non constant function $x$ in $F$, $h(x)\geq c$.}

\vskip 2mm
In \cite{Eblog}, J.~Ellenberg says that an infinite algebraic extension $F/K(T)$ has the {\it geometric Bogomolov property} if there exists some $c>0$ such condition (GB) holds. 
This is his geometric analog of the {\it Bogomolov property} of an algebraic 
extension $F/\overline \Qq$ (introduced in \cite{BZ}), which requests that there exists some $c>0$ such that if $x\in F$ is neither zero nor a root of unity, then $h(x)\geq c$, where $h(x)$ is the classical Weil logarithmic height on $\overline \Qq$.

For the Bogomolov property of algebraic extensions $F/\Qq$, we have the following criterion due to Bombieri-Zannier \cite[theorem 2]{BZ}, which has several interesting consequences (for example that the 
field $\mathbb{Q}^{tp}$ of totally $p$-adic numbers has the Bogomolov property, just as the field $\mathbb{Q}^{tr}$ 
of totally reals does (a result of Schinzel \cite{Schi})).

%
%


\vskip 2mm

\noindent
{(Bombieri-Zannier criterion)} {\it If $F/\mathbb{Q}$ is an algebraic extension with finite local degrees at some prime $p$, then $F$ has the Bogomolov property. 
}

%
%
\vskip 2mm


Our original example --- an infinite extension $F/K(T)$ which has uniformly 
bounded local decomposition degrees  (here they are just the ramification 
indices) but does not satisfy property {\rm (GB)}  --- shows that the geometric 
analog of the Bombieri-Zannier criterion does not hold, even if {\it all} 
decomposition degrees are assumed to be bounded (and not just 
the local degrees above {\it one} prime).

\subsubsection{A generalization using universal $p$-Frattini covers} 
The construction from \S \ref{ssec:K=Qbar}
extends to the following more general context; we refer to  \cite{Fr_introMT}, 
\cite[\S 22]{FrJa}, \cite{DeLy04}, for details. 

A group $G_1$ is given with a prime $p$ such that $p| \hskip 1mm |G_1|$ and $G_1$ is $p$-perfect, {\it i.e.} $G_1$ is generated by its elements of prime-to-$p$ order. Take for $G$ the {\it $p$-universal Frattini cover of $G_1$} (which generalizes the pro-dihedral group $\Zz_p\rtimes \Zz/2\Zz$) and for $(G_m)_{m\geq 1}$ the natural collection of finite characteristic quotients of $G$ (which generalize the dihedral groups $\Zz/p^m \Zz \rtimes \Zz/2\Zz$, $m\geq 1$). Select $r$ elements of $G_1$
of prime-to-$p$ order generating $G_1$.  The conjugacy class of each of these elements
can be lifted to a conjugacy class $C_i$ of $G$ with the same order, $i=1,\ldots,r$ ({the lifting lemma}).
Pick an element $\sigma_i\in C_i$, $i=1,\ldots r$ and consider the $2r$-tuple $(\sigma_1,\sigma_1^{-1}\ldots,\sigma_r,\sigma_r^{-1})$; its entries generate $G$ (the Frattini property) and are of product one.

Extensions $F_m/K(T)$ can then be constructed as in \S \ref{ssec:K=Qbar} with the $2r$-tuple above replacing the $4$-tuple $(\sigma,\sigma,\tau,\tau)$ and $2r$ distinct points of $\Pp^1(K)$ replacing the $4$ chosen points $t_1,\ldots,t_4\in \Pp^1(K)$ in \S \ref{ssec:K=Qbar}. Set $F=\limind_{m\geq 1} F_m$. The extension $F/K(T)$ is Galois of group $G$ and it satisfies {\rm (Ub-dec-d)} but is not  contained in  $K(B)^{(d)}$ for any $d$.
The main point is that $G$ is still of infinite exponent in this more general context. Indeed the $p$-Sylow subgroups of $G$ are known to be free pro-$p$ groups and so cannot have non trivial elements of finite order. 
%
%
%
%

\subsubsection{In the abelian situation {\rm the following can be added}}

\begin{proposition}
Let $F/K(T)$ be an abelian extension, with finitely many branch points and such that condition {\rm (Ub-dec-d)} holds. Then not only $F\subset K(B)^{(d)}$ but $F/K(T)$ is finite. 
\end{proposition}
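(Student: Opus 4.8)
The plan is to exploit the fact that for an abelian extension $F/K(T)$ the inertia groups at the branch points generate the whole Galois group, and that tameness forces these inertia groups to be cyclic of bounded order. Let $G=\Gal(F/K(T))$, abelian, and let $t_1,\dots,t_r\in \Pp^1(K)$ be the finitely many branch points. Writing $F$ as an increasing union of finite Galois subextensions $F_n/K(T)$, each $F_n/K(T)$ is abelian with branch locus contained in $\{t_1,\dots,t_r\}$, and $G=\limproj_n \Gal(F_n/K(T))$. For each branch point $t_i$, fix a place of $F$ above $t_i$ and let $I_i\subset G$ be the corresponding inertia group; since $G$ is abelian the inertia group does not depend on the choice of place above $t_i$ and is canonically a subgroup of $G$. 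The first key point I would establish is that $G$ is topologically generated by $I_1,\dots,I_r$: in the tame case this is the profinite analog of the classical fact that, over an algebraically closed (or separably closed) constant field, the fundamental group is generated by the inertia generators subject to a product relation, and the generation statement survives base change and passage to $K$ not algebraically closed because the geometric inertia still generates the geometric part, and the abelian hypothesis lets one descend.

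The second key point is to bound the orders of the $I_i$. This is where condition (Ub-dec-d) enters: by definition the local decomposition degrees of $F/K(B)$ are uniformly bounded, and at a branch point the inertia group is contained in (indeed, for tame branching, essentially equals, up to the residue degree) the decomposition group. More precisely, I would argue that each $I_i$ injects into a decomposition group at a suitable $\mathcal{M}$-local point, so that $|I_i|$ is bounded by the uniform bound $\delta$ coming from (Ub-dec-d). Combined with tameness — which holds because, as noted in the proof of proposition \ref{c_ex_p} and the surrounding discussion, tame inertia groups are procyclic — each $I_i$ is a \emph{cyclic} group of order at most $\delta$.

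Putting these together: $G$ is generated by finitely many (namely $r$) procyclic subgroups $I_1,\dots,I_r$, each of order at most $\delta$. A finitely generated abelian group in which every chosen generating subgroup is finite of bounded order is itself finite: indeed $G$ is then a quotient of $I_1\times\cdots\times I_r$, a finite group, so $|G|\leq \delta^r<\infty$. Hence $F/K(T)$ is finite, which is strictly stronger than the conclusion $F\subset K(B)^{(d)}$ for some $d$ (a finite extension trivially sits inside $K(B)^{(d)}$ with $d=[F:K(T)]$).

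The main obstacle I anticipate is justifying rigorously the generation statement ``$G=\langle I_1,\dots,I_r\rangle$'' when the constant field $K$ is not separably closed, and in particular controlling the interaction between the geometric inertia and the arithmetic (constant-field) part of $G$. Over a separably closed field this is immediate from the structure of the tame fundamental group of $\Pp^1$ minus finitely many points; in general one must argue that the constant-field part of an abelian $F/K(T)$ is itself controlled by (Ub-dec-d) — for an abelian constant extension the local degrees are exactly the degrees of the residue extensions, which (Ub-dec-d) bounds — so the constant part is finite too and is absorbed into the same finiteness count. A clean way to handle this is to pass to $\overline K$ to get finiteness of the geometric part $\Gal(F\overline K/\overline K(T))$ from the inertia bound, then treat the constant-field quotient $\Gal(F\cap \overline K(T)\,/\,K(T))$ separately via the decomposition-degree bound, and conclude by the exact sequence relating the two that $G$ itself is finite.
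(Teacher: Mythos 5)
In the setting where this proposition actually lives --- it is stated under the standing assumption, announced just before it, that $K$ is algebraically closed of characteristic $0$ and $B=\Pp^1$ (the remarks of that subsection all refer to case (b) of theorem \ref{thm:2-examples}) --- your core argument is correct and is essentially the paper's own, phrased at the profinite level rather than through finite quotients. The paper takes a finite Galois subextension $F_0/K(T)$ of group $G_0$, applies the Riemann existence theorem to get inertia generators $\sigma_1,\ldots,\sigma_r$ with $\sigma_1\cdots\sigma_r=1$, bounds each $\hbox{ord}(\sigma_i)$ by $\delta$ via {\rm (Ub-dec-d)} (over an algebraically closed field the residue extensions are trivial, so decomposition equals inertia and the branch points themselves are local points), and concludes $|G_0|\le\delta^{r-1}$ uniformly in $F_0$; you do the same with the inertia subgroups $I_1,\ldots,I_r$ of the full group $G$, getting $G$ as a quotient of $I_1\times\cdots\times I_r$, hence $|G|\le \delta^{r}$. (Your justification of tameness is circular as written, but immaterial: in characteristic $0$ all branching is tame, and cyclicity of the $I_i$ is never needed, only the bound on their orders.)

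The genuine error is in your final paragraph, where you try to extend the statement to $K$ not separably closed. You propose to control the constant-field part by noting that {\rm (Ub-dec-d)} bounds the residue degrees and to conclude that ``the constant part is finite too''. That implication is false, and it is precisely the phenomenon that \S \ref{sec:counterexamples} and the Checcoli--Zannier papers \cite{ChZa} are about: uniformly bounded local degrees do not force an algebraic extension to be finite. Concretely, take $K=\Qq$ with its standard localization set and let $M$ be the compositum of all quadratic extensions of $\Qq$. Then $M/\Qq$ is infinite abelian, yet for every local field $k_v$ over $\Qq$ one has $[Mk_v:k_v]\le [M\Qq_\ell:\Qq_\ell]\le 8$, since $M\Qq_\ell$ is contained in the compositum of the finitely many quadratic extensions of $\Qq_\ell$. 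Hence $F=M(T)$ is an infinite abelian extension of $\Qq(T)$ with empty branch locus satisfying {\rm (Ub-dec-d)}: the proposition itself is false over $\Qq$, so no repair of the generation-by-inertia step can succeed there. For such $K$ the most one can conclude about the constant part is containment in some $K^{(d)}$ (this is corollary \ref{cor:abelian}, available when the Tchebotarev property holds); the algebraically-closed hypothesis is not a removable technicality but exactly what eliminates this counter-example.
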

 
\begin{proof} Denote the branch points of $F/K(T)$ by
$t_1,\ldots,t_r$. Let $F_0/K(T)$ be a finite Galois sub-extension of $F/K(T)$ of group $G_0$.
From the Riemann existence theorem, $G_0$ is generated by $r$ elements $\sigma_1,\ldots,\sigma_r$ such that $\sigma_1 \cdots\sigma_r = 1$; moreover $\sigma_i$ is a generator 
of some inertia group above $t_i$. From assumption (Ub-dec-d), the order of $\sigma_i$ is bounded by some constant $\delta$, independent of $i$. Since $G_0$ is abelian we have $|G_0|\leq {\delta}^{r-1}$. 
As all finite sub-extensions of $F/K(T)$ are abelian and the argument holds for any of them, conclude that $F/K(T)$ is finite and that $[F:K(T)]\leq {\delta}^{r-1}$.
\end{proof}

\bibliography{LocGlob8}

\begin{thebibliography}{NSW08}

\bibitem[Bec91]{Beckmann}
Sybilla Beckmann.
\newblock On extensions of number fields obtained by specializing branched
  coverings.
\newblock {\em J. Reine Angew. Math.}, 419:27--53, 1991.

\bibitem[BSP09]{Bary-Soroker-Paran}
Lior Bary-Soroker and Elad Paran.
\newblock Fully hilbertian fields.
\newblock {\em Isr. J. Math.}, 2009.
\newblock (to appear).

\bibitem[BZ01]{BZ}
Enrico Bombieri and Umberto Zannier.
\newblock A note on heights in certain infinite extensions of $\mathbb{Q}$.
\newblock {\em Rend. Mat. Acc. Lincei}, 12(1):5--14, 2001.

\bibitem[Che11]{Ch}
Sara Checcoli.
\newblock Fields of algebraic numbers with bounded local degree and their
  properties.
\newblock {\em Trans. A.M.S.}, 2011.
\newblock (to appear).

\bibitem[CZ11]{ChZa}
Sara Checcoli and Umberto Zannier.
\newblock On fields of algebraic numbers with bounded local degree.
\newblock {\em C. R. Acad. Sci. Paris, Ser. I}, 349:11--14, 2011.

\bibitem[DD97]{DeDes1}
Pierre D{\`e}bes and Bruno Deschamps.
\newblock The regular inverse {G}alois problem over large fields.
\newblock In {\em Geometric Galois Action}, volume 243 of {\em London Math.
  Soc. Lecture Note Series, {\rm (L.~Schneps and P.~Lochak ed)}}, pages
  119--138. Cambridge University Press, 1997.

\bibitem[D{\`e}b99]{DeBB}
Pierre D{\`e}bes.
\newblock Galois covers with prescribed fibers: the {B}eckmann-{B}lack problem.
\newblock {\em Ann. Scuola Norm. Sup. Pisa, {\rm Cl. Sci. (4)}}, 28:273--286,
  1999.

\bibitem[D{\`e}b06]{DeLy04}
Pierre D{\`e}bes.
\newblock An introduction to the modular tower program.
\newblock In {\em Groupes de Galois arithm\'etiques et diff\'erentiels},
  volume~13 of {\em S\'eminaires~et Congr\`es}, pages 127--144. SMF, 2006.

\bibitem[DG11]{DEGha2}
Pierre D{\`e}bes and Nour Ghazi.
\newblock Specializations of {G}alois covers of the line.
\newblock In {\em Alexandru Myller Mathematical Seminar, Proceedings of the
  Centennial Conference}, volume 1329 of {\em American Institute of Physics},
  pages 98--108. V. Barbu and O. Carja, Eds, 2011.

\bibitem[DG12]{DEGha}
Pierre D{\`e}bes and Nour Ghazi.
\newblock Galois covers and the {H}ilbert-{G}runwald property.
\newblock {\em Ann. Inst. Fourier}, 62, 2012.

\bibitem[DH92]{Doerk}
Klaus Doerk and Trevor Hawkes.
\newblock {\em Finite Solvable Groups}.
\newblock De Gruyter, Berlin, 1992.

\bibitem[DH99]{DeHa}
Pierre D{\`e}bes and Dan Haran.
\newblock Almost hilbertian fields.
\newblock {\em Acta Arith.}, 88/3:269--287, 1999.

\bibitem[DL12]{DeLe2}
Pierre D{\`e}bes and Fran{\c c}ois Legrand.
\newblock Twisted covers and specializations.
\newblock In {\em Galois-Teichmueller theory and Arithmetic Geometry,
  Proceedings of Conferences in Kyoto (October 2010)}, volume~63 of {\em
  Advanced Studies in Pure Math.} H.Nakamura, F.Pop, L.Schneps, A.Tamagawa
  eds., 2012.

\bibitem[Efr91]{efrat}
Ido Efrat.
\newblock Absolute {G}alois groups of $p$-adically maximal ${\rm p}p{\rm c}$
  fields.
\newblock {\em Forum Math.}, 3:437--460, 1991.

\bibitem[Eke90]{ekedahl}
Torsten Ekedahl.
\newblock An effective version of {H}ilbert's irreducibility theorem.
\newblock In {\em S\'eminaire de Th\'eorie des Nombres, Paris 1988/89},
  volume~91 of {\em Progress in Mathematics}, pages 241--248. Birkh\"auser,
  1990.

\bibitem[Ell]{Eblog}
Jordan~S. Ellenberg.
\newblock Gonality, the {B}ogomolov property, and {H}abegger's theorem on
  {$\mathbb{Q}(E^{tors})$}.
\newblock {\it Quomodocumque} (blog), post november 20, 2011.

\bibitem[FJ04]{FrJa}
Michael~D. Fried and Moshe Jarden.
\newblock {\em Field arithmetic}, volume~11 of {\em Ergebnisse der Mathematik
  und ihrer Grenzgebiete}.
\newblock Springer-Verlag, Berlin, 2004.
\newblock (second edition).

\bibitem[Fri74]{fried-hilbert}
Michael~D. Fried.
\newblock On {H}ilbert's irreducibility theorem.
\newblock {\em J. Number Theory}, 6:211--231, 1974.

\bibitem[Fri95]{Fr_introMT}
Michael~D. Fried.
\newblock Introduction to modular towers: generalizing dihedral group--modular
  curve connections.
\newblock In {\em Recent developments in the inverse Galois problem}, volume
  186 of {\em Contemp. Math.}, pages 111--171. Amer. Math. Soc., Providence,
  RI, 1995.

\bibitem[FV92]{FV92}
Michael~D. Fried and Helmut V{\"o}lklein.
\newblock The embedding problem over a {H}ilbertian {PAC}-field.
\newblock {\em Ann. of Math. (2)}, 135(3):469--481, 1992.

\bibitem[Har94]{harbater}
David Harbater.
\newblock Abhyankar's conjecture on {G}alois groups over curves.
\newblock {\em Inventiones Mathematicae}, 117:1--25, 1994.

\bibitem[Jan96]{janusz}
Gerald Janusz.
\newblock {\em Algebraic Number Fields}.
\newblock Graduate studies in mathematics. American Mathematical Soc., 1996.

\bibitem[Jar80]{Jarden-Tchebotarev}
Moshe Jarden.
\newblock An analogue of \^{C}ebotarev density theorem for fields of finite
  corank.
\newblock {\em Journal of Mathematics Kyoto University}, 20:141--147, 1980.

\bibitem[Jor72]{Jordan}
Camille Jordan.
\newblock Recherches sur les substitutions.
\newblock {\em J. Liouville}, 17:351--367, 1872.

\bibitem[NSW08]{Neuk}
J.~Neukirch, A.~Schmidt, and K.~Wingberg.
\newblock {\em Cohomology of number fields}, volume 323.
\newblock Springer Verlag, 2008.

\bibitem[Pop96]{PopLarge}
Florian Pop.
\newblock Embedding problems over large fields.
\newblock {\em Annals of Math.}, 144:1--35, 1996.

\bibitem[Ray94]{raynaud}
M.~Raynaud.
\newblock Rev\^{e}tements de la droite affine en caract\'{e}ristique $p>0$ et
  conjecture d'abhyankar.
\newblock {\em Inventiones Mathematicae}, 116:425--462, 1994.

\bibitem[Sch73]{Schi}
A.~Schinzel.
\newblock On the product of the conjugates outside the unit circle of an
  algebraic number.
\newblock {\em Acta Arith.}, 24:385--399, 1973.
\newblock Collection of articles dedicated to Carl Ludwig Siegel on the
  occasion of his seventy-fifth birthday. IV.

\bibitem[Ser65]{Serre-zetaL}
Jean-Pierre Serre.
\newblock Zeta and {L}-functions.
\newblock In {\em Arithmetic Algebraic Geometry}, Schilling, ed., pages 82--92.
  Harper and Row, New York, 1965.

\bibitem[Ser92]{Serre-topics}
Jean-Pierre Serre.
\newblock {\em Topics in Galois Theory}.
\newblock Research Notes in Mathematics. Jones and Bartlett Publishers, 1992.

\bibitem[VL93]{vaughan}
M.~R. Vaughan-Lee.
\newblock {\em The restricted {B}urnside problem}.
\newblock Oxford University Press, 1993.
\newblock Second Ed.

\bibitem[Wei48]{Weil_hermann}
Andr\'e Weil.
\newblock {\em Sur les courbes alg\'ebriques et les vari\'et\'es alg\'ebriques
  qui s'en d\'eduisent}.
\newblock Hermann, Paris, 1948.

\end{thebibliography}
\bibliographystyle{alpha}

\end{document}